\def\rightharpoonfill@{\arrowfill@\relbar\relbar\rightharpoonup}
\DeclareRobustCommand{\overrightharpoon}{\mathpalette{\underarrow@\rightharpoonfill@}}
\begin{document}
\newcommand{\beq}{\begin{equation}}
\newcommand{\eneq}{\end{equation}}
\newtheorem{thm}{Theorem}[section]
\newtheorem{coro}[thm]{Corollary}
\newtheorem{lem}[thm]{Lemma}
\newtheorem{prop}[thm]{Proposition}
\newtheorem{defi}[thm]{Definition}
\newtheorem{rem}[thm]{Remark}
\newtheorem{cl}[thm]{Claim}
\title{Canonical duality approach in the approximation of optimal Monge mass transfer mapping}
\author{Yanhua Wu$^1$\ \ \ \ \ \ Xiaojun Lu$^{2}$}
\thanks{Corresponding author: Xiaojun Lu}
\thanks{Email addresses: yhwu85@126.com(Yanhua Wu), lvxiaojun1119@hotmail.de(Xiaojun Lu)}
\thanks{Keywords: Monge mass transfer, singular variational
problem, optimal distribution density, canonical duality theory}
\thanks{Mathematics Subject Classification: 35J20, 35J60, 49K20,
80A20}
\date{}
\maketitle
\begin{center}
1. Department of Sociology, School of Public Administration, Hohai
University, 211189, Nanjing, China\\
2. Department of Mathematics \& Jiangsu Key Laboratory of
Engineering Mechanics, Southeast University, 210096, Nanjing,
China\\
\end{center}
\date{}
\maketitle
\begin{abstract} This paper mainly addresses the Monge mass transfer
problem in the 1-D case. Through an ingenious approximation
mechanism, one transforms the Monge problem into a sequence of
minimization problems, which can be converted into a sequence of
nonlinear differential equations with constraints by variational
method. The existence and uniqueness of the solution for each
equation can be demonstrated by applying the canonical duality
method. Moreover, the duality method gives a sequence of perfect
dual maximization problems. In the final analysis, one constructs
the approximation of optimal mapping for the Monge problem according
to the theoretical results.
\end{abstract}
\renewcommand{\abstractname}{R\'{e}sum\'{e}}
\begin{abstract}
Dans cet article, on consid\`{e}re essentiellement le probl\`{e}me
des transports des masses de Monge en dimension 1. En utilisant un
m\'{e}canisme d'approximation ing\'{e}nieuse, on transforme le
probl\`{e}me de Monge en une s\'{e}quence de probl\`{e}mes de
minimisation, qui peut \^{e}tre convertie en une s\'{e}quence des
\'{e}quations diff\'{e}rentielles nonlin\'{e}aires avec des
contraintes par la m\'{e}thode variationelle. En particulier, nous
prouvons l'existence et l'unicit\'{e} de la solution en appliquant
la m\'{e}thode de dualit\'{e} canonique. De plus, la transformation
de dualit\'{e} donne une s\'{e}quence des duals parfaits de
maximisation. Enfin, nous \'{e}tudions le probl\`{e}me de Monge
selon les r\'{e}sultats th\'{e}oriques.
\end{abstract}

\section{Introduction}
The Monge mass transfer model is widely used in modern social and
economic activities, medical science and mechanical processes, etc.
In these respects, some typical examples include the migration
problem, distribution of industrial products, purification of blood
in the kidneys and livers, shape optimization, etc. Interested
readers can
refer to \cite{LA1,LA2,Evans1,K1,K2,Monge,Su} for more details.\\

The original transfer problem, which was proposed by Monge
\cite{Monge}, investigated how to move one mass distribution to
another one with the least amount of work. In this paper, we
consider the Monge problem in the 1-D case. Let $\Omega=[a,b]$ and
$\Omega^*=[c,d]$, $a,b,c,d\in\mathbb{R}$. Here we focus on the
closed case, and other bounded cases can be discussed similarly.
Moreover, $f^+$ and $f^-$ are two nonnegative density functions in
$\Omega$ and $\Omega^*$, respectively, and satisfy the normalized
balance condition
$$\int_\Omega f^+dx=\int_{\Omega^*}f^-dx=1.$$ Let $c:\Omega\times
\Omega^*\to[0,+\infty)$ be a cost function, which indicates the work
required to move a unit mass from the position $x$ to a new position
$y$. There are many types of cost functions while dealing with
different problems \cite{LA1,Ca1,Evans1,Wang1}. In the Monge
problem, the cost function is proportional to the distance $|x-y|$,
for simplicity, $c(x,y)=|x-y|.$ The Monge problem consists in
finding an optimal mass transfer mapping ${\bf s}^*:\Omega\to
\Omega^*$ to minimize the cost functional $I({\bf s})$: \beq I({\bf
s}^*)=\displaystyle\min_{{\bf s}\in\mathscr{N}}\Big\{I[{\bf
s}]:=\int_{\Omega}|x-{\bf s}(x)|f^+(x)dx\Big\},\eneq where ${\bf
s}:\Omega\to \Omega^*$ belongs to the class $\mathscr{N}$ of
measurable one-to-one mappings driving $f^+(x)$ to $f^-(y)$.\\

In the 1940s, Kantorovich initiated a duality theory by relaxing
Monge transfer problem to the task of finding a maximizer for the
Kantorovich problem \cite{K1,K2}. This mechanism plays an archetypal
role in the infinite-dimensional linear programming \cite{V}. As a
matter of fact, the Kantorovich problem may not be a perfect dual to
the Monge problem unless a so-called {\it dual criteria for
optimality} is satisfied \cite{Ca1,Evans1}. Indeed, a huge body of
mathematical tools have been developed for computing the maximizer,
such as the Monge-Kantorovich-Rubinstein-Wasserstein matrices
\cite{R}, dual potentials for capacity constrained optimal transport
\cite{Mc1}, etc.\\

It turns out that the nonuniform convexity of the cost function
$c(x,y)$ defeats many simple attempts to sort out the structure of
optimal mass allocation. In order to gain some insight into this
problem, many mathematicians introduced lots of approximating
mechanisms. For example, L. A. Caffarelli, W. Gangbo, R. J. McCann
and X. J. Wang \cite{Ca2,GC1,GC2,Wang2}, etc. utilized an
approximation of strictly convex cost functions
$$c_\epsilon(x,y)=|x-y|^{1+\epsilon}\ \ \ \epsilon>0.$$ The existence and uniqueness
of the optimal mapping ${\bf s}^*_\epsilon$ can be proved by convex
analysis. Then let $\epsilon$ tends to $0$, and one can construct an
optimal mapping ${\bf s}^*$ by using transfer rays and transfer sets
invoked by L. C. Evans and W. Gangbo \cite{Evans2}. In addition, N.
S. Trudinger and X. J. Wang etc. used the approximation
$$c_{\epsilon}(x,y)=\sqrt{\epsilon^2+|x-y|^2}$$
in the discussion of regularity \cite{Wang1,Wang2}. Moreover, L. C.
Evans, W. Gangbo and J. Moser \cite{DM,Evans1,Evans2} provided an
ODE recipe to build ${\bf s}^*$ by solving a flow problem involving
$Du$. This
method is very useful but really complicated.\\

In this paper, we consider the approximation of an optimal mapping
through solving the optimization of distribution density in the
probability theory. Here, we mainly consider two typical cases of
$\Omega\bigcap\Omega^*=\emptyset$, namely,
\begin{itemize}
\item {\bf Assumption I}: $0\leq c<d<a<b$, $a>>0$, $d-c$ sufficiently large, specified in Lemma 2.9;
\item {\bf Assumption II}: $a<b<c<d\leq0$, $b<<0$, $d-c$ sufficiently large, specified in Lemma 2.9.
\end{itemize}
Let $\alpha>0$ be sufficiently large and consider the distribution
densities subject to \beq u\in W_0^{1,\infty}(\Omega^*)\cap
C(\overline{\Omega^*}), \eneq \beq u\geq 0,\ \text{a.e.\ in}\
\Omega^*,\eneq
\beq\|u\|_{L^1(\Omega^*)}=1,\eneq
\beq\|u_y\|_{L^\infty(\Omega^*)}\leq\alpha,\eneq where
$W_0^{1,\infty}(\Omega^*)$ is the Sobolev space and $u_y$ stands for
the weak derivative with respect to $y\in\Omega^*$.
It is evident that the $\delta$-function is excluded from our
discussion.
On the one hand, under the Assumption I, Monge transfer problem (1)
can be converted into a maximization of the expectation of the
real-valued random variable $Y\in\Omega^*$ with respect to the
distribution densities $u$ subject to (2)-(5), \beq (\mathcal
{P}^{(1)}):\displaystyle\max_{{u}}\Big\{\mathbb{E}_{u}(Y)
:=\int_{\Omega^*}yu(y)dy\Big\},\eneq with
\begin{itemize}
\item {\bf Assumption III}: The optimal mapping ${\bf s^*}$ is strictly
monotonous. Under Assumption I, ${\bf s^*}(a)=d$ or ${\bf
s^*}(b)=d$; while under Assumption II, ${\bf s^*}(a)=c$ or ${\bf
s^*}(b)=c$.
\end{itemize}
On the other hand, under the Assumptions II and III, Monge transfer
problem (1) can be converted into a minimization of the expectation
of the real-valued random variable $Y\in\Omega^*$ with respect to
the distribution densities $u$ subject to (2)-(5),, \beq (\mathcal
{P}^{(2)}): \displaystyle\min_{{u}}\Big\{\mathbb{E}_{u}(Y)
:=\int_{\Omega^*}yu(y)dy\Big\}.\eneq

In this paper, we investigate the analytic approximating mapping
through {\it canonical duality method} introduced by David. Y. Gao
and G. Strang \cite{G1,G2,G3}. This theory was originally proposed
to find minimizers for a non-convex strain energy functional with a
double-well potential. During the last few years, considerable
effort has been taken to illustrate these non-convex problems from
the theoretical point of view. Through applying this method, David
Y. Gao and G. Strang characterized the local energy extrema and the
global energy minimizer for both hard device and soft device and
finally obtained the analytical solutions. Readers can refer to
\cite{G4,G5,G6,G7}.\\

Inspired by the survey paper \cite{Evans3}, we propose a nonlinear
differential equation approach by introducing a sequence of
approximation problems of the primal $(\mathcal {P}^{(1)})$ and
$(\mathcal {P}^{(2)})$, namely,
\begin{equation}(\mathcal{P}^{(\varepsilon)}):
\displaystyle\min_{w_\varepsilon}\Big\{I^{(\varepsilon)}[w_\varepsilon]:=\int_{\Omega^*}
L^{(\varepsilon)}(w_{\varepsilon,y},w_\varepsilon,y)dy:=\int_{\Omega^*}
\Big(H^{(\varepsilon)}(w_{\varepsilon,y})-w_\varepsilon
|y|\Big)dy\Big\},
\end{equation}
where $H^{(\varepsilon)}:\mathbb{R}\to\mathbb{R}^+$ is defined as
\[
H^{(\varepsilon)}(\gamma):=\varepsilon{\rm
e}^{(\gamma^2-\alpha^2)/(2\varepsilon)}.
\]
Moreover,
$$L^{(\varepsilon)}(P,z,y):\mathbb{R}\times\mathbb{R}\times\Omega^*\to
\mathbb{R}$$ satisfies the following coercivity inequality and is
convex in the variable $P$, \beq L^{(\varepsilon)}(P,z,y)\geq
p_{\varepsilon}|P|^2-q_\varepsilon,\ P,z\in\mathbb{R}, y\in\Omega^*,
\eneq for constants $p_\varepsilon$ and $q_\varepsilon$.
$I^{(\varepsilon)}$ is called the potential energy functional and is
weakly lower semicontinuous on $W^{1,\infty}_0(\Omega^*)$. It's
worth noticing that when $|\gamma|\leq\alpha$, then
$$\displaystyle\lim_{\varepsilon\to
0^+}H^{(\varepsilon)}(\gamma)=0$$ uniformly. From \cite{Evans4}, one
knows immediately there exists a distribution density
$\bar{u}_\varepsilon$ solving
$$I^{(\varepsilon)}[\bar{u}_\varepsilon]=\displaystyle\min_{w_\varepsilon}\Big\{I^{(\varepsilon)}[w_\varepsilon]\Big\}.$$
Consequently, once such a sequence of functions
$\{\bar{u}_\varepsilon\}_\varepsilon$ is obtained, then it will help
find an optimal distribution density which solves the primal
problems $(\mathcal {P}^{(1)})$ or $(\mathcal {P}^{(2)})$. This
paper is aimed to obtain an explicit representation of this
approximation sequence. Generally speaking, there are plenty of
approximating schemes, for example, one can also let
$$H^{(\varepsilon)}(\gamma):=\varepsilon(\gamma^2-\alpha^2)^2.$$ Then by following the
procedure in dealing with double-well potentials in \cite{G1,G7}, we
could definitely find an optimal distribution density.

By variational calculus, correspondingly, one derives a sequence of
Euler-Lagrange equations for $(\mathcal{P}^{(\varepsilon)})$, \beq
\begin{array}{ll}\displaystyle ({\rm
e}^{(u_{\varepsilon,y}^2-\alpha^2)/(2\varepsilon)}u_{\varepsilon,y})_y+|y|=0,&
\ \text{\rm in}\ U^{(\varepsilon)},
\end{array}\eneq
equipped with the Dirichlet boundary condition, where the compact
support
$$U^{(\varepsilon)}:=\text{Supp}(u_\varepsilon)\subset\Omega^*$$ is connected and will
be determined later. The term ${\rm
e}^{(u_{\varepsilon,y}^2-\alpha^2)/(2\varepsilon)}$ is called the
transport density. As a matter of fact,
$\{u_\varepsilon\}_\varepsilon$ is a sequence of strictly concave
functions. Clearly, like $p-$Laplacian, ${\rm
e}^{(u_{\varepsilon,y}^2-\alpha^2)/(2\varepsilon)}$ is a highly
nonlinear function, which is difficult to solve by the direct
approach \cite{JH,Evans4,LIONS}. However, by the canonical duality
theory, one is able to demonstrate the existence and uniqueness of
the solution of the Euler-Lagrange equation, which establishes the
equivalence between the local minimizer of
($\mathcal{P}^{(\varepsilon)}$)
and the solution of Euler-Lagrange equation (5). This will help find a global minimizer of ($\mathcal{P}^{(\varepsilon)}$).\\

At the moment, we would like to introduce the main theorems. First,
we consider the approximation problem of (8).
\begin{thm}
For any $\varepsilon>0$, there exists a sequence of solutions
$\{\bar{u}_\varepsilon\}_\varepsilon$ satisfying (2)-(5) for the
Euler-Lagrange equations (10), which is at the same time a sequence
of global minimizers for the approximation problems
($\mathcal{P}^{(\varepsilon)}$) in the following form,
\begin{itemize}
\item Under Assumption I,

\[
\bar{u}_\varepsilon(y)= \left\{
\begin{array}{cl}
\displaystyle\int_{d}^{y}(-G(t)+C_\varepsilon(d))/E_\varepsilon^{-1}((-G(t)+C_\varepsilon(d))^2)dt,&
y\in[p_\varepsilon^*(d),d]\subset\Omega^*,\\
\\
0,& \text{elsewhere}\ \text{in}\ \Omega^*;
\end{array}
\right.
\]
\item Under Assumption II,
\[
\bar{u}_\varepsilon(y)= \left\{
\begin{array}{cl}
\displaystyle\int^{y}_{c}(G(t)-D_\varepsilon(c))/E_\varepsilon^{-1}((G(t)-D_\varepsilon(c))^2)dt,&
y\in[c,q_\varepsilon^*(c)]\subset\Omega^*,\\
\\
0,& \text{elsewhere}\ \text{in}\ \Omega^*,
\end{array}
\right.
\]
\end{itemize}
where $E_\varepsilon$ and $G$ are defined as
\[
\left\{
\begin{array}{ll}
E_\varepsilon(\gamma):=\displaystyle \gamma^2\ln({\rm
e}^{\alpha^2}\gamma^{2\varepsilon}),&
\gamma\in[{\rm e}^{-\alpha^2/(2\varepsilon)},1],\\
\\
G(y):=\displaystyle y^2/2, & y\in[p_\varepsilon^*(d),d]\ \text{or}\
[c,q_\varepsilon^*(c)].
\end{array}
\right.
\]
$E_\varepsilon^{-1}$ stands for the inverse of $E_\varepsilon$,
$C_\varepsilon(d)$ and $p^*_\varepsilon(d)$ are constants depending
on $d$ and $\varepsilon$, while $D_\varepsilon(c)$ and
$q^*_\varepsilon(c)$ are constants depending on $c$ and
$\varepsilon$.
\end{thm}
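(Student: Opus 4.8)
The plan is to organize the whole argument around the canonical duality transformation, treating the existence of a minimizer $\bar u_\varepsilon$ as already granted (by the cited coercivity and weak lower semicontinuity) and concentrating on its explicit representation and global optimality. First I would introduce the geometric measure $\xi=\Lambda(u):=\tfrac12(u_y)^2$ and the canonical function $V(\xi):=\varepsilon\,{\rm e}^{(2\xi-\alpha^2)/(2\varepsilon)}$, so that $H^{(\varepsilon)}(u_y)=V(\Lambda(u))$. A direct computation of $(H^{(\varepsilon)})''$ shows $V$ is strictly convex, hence the duality relation $\varsigma=V'(\xi)={\rm e}^{((u_y)^2-\alpha^2)/(2\varepsilon)}$ is one-to-one, and one recognizes that $\varsigma$ is precisely the transport density. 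With the Legendre conjugate $V^*(\varsigma)=\varepsilon\varsigma(\ln\varsigma-1)+\tfrac{\alpha^2}{2}\varsigma$ I would form the total complementary functional $\Xi(u,\varsigma)=\int_{\Omega^*}\bigl[\tfrac12(u_y)^2\varsigma-V^*(\varsigma)-u|y|\bigr]\,dy$, whose stationarity in $u$ reproduces $(\varsigma u_y)_y+|y|=0$, i.e. equation (10).

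Next I would extract the first integral. Under Assumption I we have $y\ge0$, so $|y|=y$, and one integration gives $\varsigma u_y=-G(y)+C_\varepsilon(d)$ with $G(y)=y^2/2$ and an integration constant $C_\varepsilon(d)$ fixed by the Dirichlet datum at $d$. The key algebraic step is to eliminate $u_y$: squaring the first integral and inserting the constitutive identity $(u_y)^2=\alpha^2+2\varepsilon\ln\varsigma$ (the definition of $\varsigma$ solved for $(u_y)^2$) yields $\varsigma^2(\alpha^2+2\varepsilon\ln\varsigma)=(-G(y)+C_\varepsilon(d))^2$, that is $E_\varepsilon(\varsigma)=\Phi(y)^2$ with $\Phi:=-G+C_\varepsilon(d)$. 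Hence $\varsigma=E_\varepsilon^{-1}(\Phi^2)$ and $u_y=\Phi/E_\varepsilon^{-1}(\Phi^2)$, and integrating from $y=d$ with $u(d)=0$ produces exactly the stated closed form; Assumption II is the mirror image ($y\le0$, $|y|=-y$, integration based at $c$). To make $E_\varepsilon^{-1}$ meaningful I would check that $E_\varepsilon'(\gamma)=2\gamma(\alpha^2+\varepsilon+2\varepsilon\ln\gamma)>0$ on $[{\rm e}^{-\alpha^2/(2\varepsilon)},1]$, so $E_\varepsilon$ is a strictly increasing bijection onto $[0,\alpha^2]$, and that the gradient bound (5) forces $\Phi^2=(\varsigma u_y)^2\le\alpha^2$, keeping the argument in range.

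The remaining analytic content is the determination of the two constants $C_\varepsilon(d)$ and the left support endpoint $p^*_\varepsilon(d)$ (respectively $D_\varepsilon(c)$ and $q^*_\varepsilon(c)$). These are pinned down by the requirement that the support be a connected interval with $\bar u_\varepsilon(p^*_\varepsilon(d))=0$ together with the normalization $\|\bar u_\varepsilon\|_{L^1(\Omega^*)}=1$; I would treat this as a $2\times2$ nonlinear system and argue existence and uniqueness of the pair by monotonicity of the relevant integrals in the parameters combined with the intermediate value theorem, while simultaneously verifying $\bar u_\varepsilon\ge0$, its concavity, and the bound $|\bar u_{\varepsilon,y}|\le\alpha$. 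This is the step I expect to be the main obstacle, since the constants enter through the implicitly defined $E_\varepsilon^{-1}$ and all of the admissibility constraints (2)--(5) must be reconciled at once.

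Finally, I would close the optimality claim through the duality. Because $V$ is convex, $\sup_\varsigma\Xi(u,\varsigma)=I^{(\varepsilon)}[u]$; and since the recovered dual field is the transport density $\bar\varsigma={\rm e}^{((\bar u_{\varepsilon,y})^2-\alpha^2)/(2\varepsilon)}>0$, the term $\tfrac12(u_y)^2\bar\varsigma$ makes $\Xi(\cdot,\bar\varsigma)$ convex in $u$, so $(\bar u_\varepsilon,\bar\varsigma)$ is a saddle point and $I^{(\varepsilon)}[\bar u_\varepsilon]=\min_u\sup_\varsigma\Xi=\max_\varsigma\min_u\Xi=\Pi^d(\bar\varsigma)$. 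This identifies $\bar u_\varepsilon$ as a global minimizer of $(\mathcal{P}^{(\varepsilon)})$, exhibits the perfect dual maximization problem, and, by the strict convexity of $H^{(\varepsilon)}$ on the admissible set, yields uniqueness.
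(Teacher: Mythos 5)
Your proposal is correct and follows essentially the same route as the paper: the canonical dual transformation leading to the dual algebraic equation $E_\varepsilon(\varsigma)=\Phi^2$, inversion to obtain $u_y=\Phi/E_\varepsilon^{-1}(\Phi^2)$, determination of the constants by monotonicity of the boundary and normalization integrals, and global optimality via convexity of the canonical energy. The only cosmetic differences are your choice of geometric measure $\xi=\tfrac12 u_y^2$ where the paper uses $(u_y^2-\alpha^2)/(2\varepsilon)$, your saddle-point (minimax) argument in place of the paper's explicit second-variation computation (Lemma 2.10), and your simultaneous $2\times2$ system for the constants where the paper proceeds triangularly --- first fixing $C_\varepsilon(s)$ from the Dirichlet data (Lemma 2.8), then the support endpoint from the unit-mass condition (Lemma 2.9), which is where the hypothesis that $d-c$ is sufficiently large enters.
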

By letting $\varepsilon\to 0^+$, one can solve the optimization
problems for the expectation of the real-valued variable
$Y\in\Omega^*$.
\begin{thm}
For the maximization problem ($\mathcal{P}^{(1)}$)(or the
minimization problem ($\mathcal{P}^{(2)}$)), there exists a global
maximizing(or minimizing) distribution density $f^-$ satisfying
(2)-(5).
%
\end{thm}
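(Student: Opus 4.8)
The plan is to obtain the optimal density $f^-$ as a limit of the approximating minimizers $\{\bar{u}_\varepsilon\}_\varepsilon$ constructed in the preceding theorem, and to verify directly that this limit solves the expectation problems $(\mathcal{P}^{(1)})$ or $(\mathcal{P}^{(2)})$. The whole argument hinges on the observation already recorded in the excerpt: since $H^{(\varepsilon)}(\gamma)=\varepsilon\,\mathrm{e}^{(\gamma^2-\alpha^2)/(2\varepsilon)}$ satisfies $0\le H^{(\varepsilon)}(\gamma)\le\varepsilon$ whenever $|\gamma|\le\alpha$, the penalizing term $\int_{\Omega^*}H^{(\varepsilon)}(w_y)\,dy$ disappears uniformly as $\varepsilon\to 0^+$ along any family respecting the Lipschitz constraint (5), and the functional $I^{(\varepsilon)}$ degenerates to $-\int_{\Omega^*}w\,|y|\,dy$. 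Optimizing the latter is, up to the sign of $y$ on $\Omega^*$, exactly the primal problem.

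First I would secure compactness and pass the constraints to the limit. By the preceding theorem each $\bar{u}_\varepsilon$ obeys (2)--(5); in particular $\|\bar{u}_{\varepsilon,y}\|_{L^\infty(\Omega^*)}\le\alpha$, so the family is uniformly Lipschitz on the compact interval $\Omega^*=[c,d]$ and, by the vanishing boundary values in (2), uniformly bounded by $\alpha(d-c)$. Arzel\`a--Ascoli then yields a subsequence (not relabelled) converging uniformly to some $f^-\in C(\overline{\Omega^*})$, with $\bar{u}_{\varepsilon,y}\rightharpoonup f^-_y$ weakly-$*$ in $L^\infty(\Omega^*)$. Admissibility of $f^-$ now follows: nonnegativity (3) survives pointwise convergence; the zero boundary values and hence membership in $W^{1,\infty}_0(\Omega^*)\cap C(\overline{\Omega^*})$ (2) survive uniform convergence; uniform convergence on the bounded interval forces $L^1$-convergence, so $\|f^-\|_{L^1(\Omega^*)}=\lim_\varepsilon\|\bar{u}_\varepsilon\|_{L^1(\Omega^*)}=1$, giving (4); and the bound (5) is inherited through the weak-$*$ lower semicontinuity of the $L^\infty$ norm.

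The heart of the matter is the optimality of $f^-$. Fix any competitor $w$ satisfying (2)--(5). Since $\bar{u}_\varepsilon$ minimizes $I^{(\varepsilon)}$ and $H^{(\varepsilon)}\ge 0$,
\[
-\int_{\Omega^*}\bar{u}_\varepsilon\,|y|\,dy\ \le\ I^{(\varepsilon)}[\bar{u}_\varepsilon]\ \le\ I^{(\varepsilon)}[w]\ =\ \int_{\Omega^*}H^{(\varepsilon)}(w_y)\,dy-\int_{\Omega^*}w\,|y|\,dy .
\]
Letting $\varepsilon\to 0^+$, the middle integral obeys $\int_{\Omega^*}H^{(\varepsilon)}(w_y)\,dy\le\varepsilon(d-c)\to 0$ by (5), while $\int_{\Omega^*}\bar{u}_\varepsilon\,|y|\,dy\to\int_{\Omega^*}f^-|y|\,dy$ because $|y|$ is bounded on $\Omega^*$ and the convergence is uniform. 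Hence $\int_{\Omega^*}f^-|y|\,dy\ge\int_{\Omega^*}w\,|y|\,dy$ for every admissible $w$, so $f^-$ maximizes $\int_{\Omega^*}w\,|y|\,dy$ over the class (2)--(5). It remains to read this off in terms of $\mathbb{E}_u(Y)$ through the sign of $y$ on $\Omega^*$: under Assumption I one has $0\le c<d$, so $|y|=y$ and $f^-$ maximizes $\mathbb{E}_u(Y)=\int_{\Omega^*}y\,u\,dy$, solving $(\mathcal{P}^{(1)})$; under Assumption II one has $d\le 0$, so $|y|=-y$ and maximizing $\int_{\Omega^*}w\,|y|\,dy$ coincides with minimizing $\mathbb{E}_u(Y)$, solving $(\mathcal{P}^{(2)})$.

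The step I expect to be most delicate is the simultaneous passage of the normalization (4) and the Lipschitz bound (5) to the limit: one must ensure that no mass escapes — guaranteed here by the compactness of $\Omega^*$ together with the uniform convergence — and that the weak-$*$ limit of the derivatives does not exceed $\alpha$. Once admissibility of $f^-$ is established, the optimality comparison above is essentially forced by the nonnegativity of the penalty $H^{(\varepsilon)}$ on the minimizer side and its uniform vanishing on the competitor side, which is precisely the mechanism by which the approximation $(\mathcal{P}^{(\varepsilon)})$ recovers the singular primal problems.
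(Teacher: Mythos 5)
Your proposal is correct, and its first half (compactness plus passage of the constraints (2)--(5) to the limit) is essentially identical to the paper's argument: the paper invokes Rellich--Kondrachov with the uniform bounds $\sup_\varepsilon|\bar{u}_\varepsilon|\le\alpha(d-c)$ and $\sup_\varepsilon|\bar{u}_{\varepsilon,y}|\le\alpha$, extracts a subsequence with $\bar{u}_{\varepsilon_k}\to f^-$ in $L^\infty$ and $\bar{u}_{\varepsilon_k,y}\overset{*}{\rightharpoonup} f^-_y$, and verifies (4) by dominated convergence and (5) by weak-$*$ lower semicontinuity --- exactly your Arzel\`a--Ascoli step. Where you genuinely go beyond the paper is the optimality of $f^-$: the paper's proof stops after establishing that the limit is an admissible density and never verifies that it actually maximizes (or minimizes) $\mathbb{E}_u(Y)$, which is the substance of the theorem's claim. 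Your comparison
\[
-\int_{\Omega^*}\bar{u}_\varepsilon|y|\,dy\ \le\ I^{(\varepsilon)}[\bar{u}_\varepsilon]\ \le\ I^{(\varepsilon)}[w]\ \le\ \varepsilon(d-c)-\int_{\Omega^*}w|y|\,dy,
\]
valid because $H^{(\varepsilon)}\ge 0$ everywhere and $H^{(\varepsilon)}(w_y)\le\varepsilon$ under the constraint $\|w_y\|_{L^\infty}\le\alpha$, followed by $\varepsilon\to 0^+$ and the sign analysis of $|y|$ under Assumptions I and II, is precisely the missing link that connects the approximating problems $(\mathcal{P}^{(\varepsilon)})$ to the primal problems $(\mathcal{P}^{(1)})$ and $(\mathcal{P}^{(2)})$. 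In short: your route buys a complete proof of the stated theorem, whereas the paper's own proof only delivers existence of an admissible limit and leaves the extremality implicit.
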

Furthermore, under Assumption III, one is able to deal with the
Monge transfer problem $(1)$ with the optimal distribution
densities. In the following, we construct a sequence of mass
transfer mappings ${\bf s}_{\varepsilon}$ approximating an optimal
mapping ${\bf s}^*$. Let
$$F(x):=\int_a^xf^+(t)dt,\ x\in[a,b].$$ If $f^+>0$, then $F$ is
monotonously increasing with respect to $x\in[a,b]$, invertible and
its inverse is denoted as
$$F^{-1}:[0,1]\to[a,b].$$ In addition, let
$$Q_{\varepsilon}(y):=\int_{p_\varepsilon^*(d)}^y\bar{u}_\varepsilon(t)dt,\ y\in[p_\varepsilon^*(d),d].$$ Since $\bar{u}_\varepsilon>0$ in $(p_\varepsilon^*(d),d)$, then $Q_{\varepsilon}$ is
monotonously increasing with respect to
$y\in[p_\varepsilon^*(d),d]$, invertible and its inverse is denoted
as
$$Q_{\varepsilon}^{-1}:[0,1]\to[p_\varepsilon^*(d),d].$$
Furthermore, let
$$R_{\varepsilon}(y):=\int_{c}^y\bar{u}_\varepsilon(t)dt,\ y\in[c,q_\varepsilon^*(c)].$$ Since $\bar{u}_\varepsilon>0$ in $(c,q_\varepsilon^*(c))$, then $R_{\varepsilon}$ is
monotonously increasing with respect to
$y\in[c,q_\varepsilon^*(c)]$, invertible and its inverse is denoted
as
$$R_{\varepsilon}^{-1}:[0,1]\to[c,q_\varepsilon^*(c)].$$
\begin{thm}
Assume that $f^+(x)>0$, $x\in[a,b]$. For the Monge transfer problem
(1) under Assumption I and Assumption III, there exists a sequence
of strictly increasing(or decreasing) mappings represented
explicitly as
\[
{\bf s}_\varepsilon(x)=\begin{array}{cl} \displaystyle
Q_{\varepsilon}^{-1}(F(x)),&x\in[a,b];
\end{array}
\]
or
\[
{\bf s}_\varepsilon(x)=\begin{array}{cl} \displaystyle
Q_{\varepsilon}^{-1}(1-F(x)),&x\in[a,b].
\end{array}
\]
While for the Monge transfer problem (1) under Assumption II and
Assumption III, there exists a sequence of strictly increasing(or
decreasing) mappings represented explicitly as
\[
{\bf s}_\varepsilon(x)=\begin{array}{cl} \displaystyle
R_{\varepsilon}^{-1}(F(x)),&x\in[a,b];
\end{array}
\]
or
\[
{\bf s}_\varepsilon(x)=\begin{array}{cl} \displaystyle
R_{\varepsilon}^{-1}(1-F(x)),&x\in[a,b].
\end{array}
\]
\end{thm}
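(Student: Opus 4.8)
The plan is to show that each explicit formula defines a strictly monotone, measurable one-to-one mapping of $[a,b]$ into $\Omega^*$ that drives $f^+$ onto the approximating optimal density $\bar{u}_\varepsilon$ furnished by Theorem 1.1, and that under Assumption III the orientation (increasing versus decreasing) is exactly the one prescribed for the optimal Monge map. First I would record that the two building blocks are genuine bijections. Since $f^+>0$ on $[a,b]$ with $\int_\Omega f^+=1$, the distribution function $F$ is a strictly increasing continuous bijection of $[a,b]$ onto $[0,1]$. By the constraints (2)--(4), $\bar{u}_\varepsilon\geq 0$, $\|\bar{u}_\varepsilon\|_{L^1(\Omega^*)}=1$, and $\bar{u}_\varepsilon>0$ on the interior of its support, so under Assumption I the function $Q_\varepsilon$ maps $[p_\varepsilon^*(d),d]$ strictly increasingly onto $[0,1]$ with $Q_\varepsilon(p_\varepsilon^*(d))=0$ and $Q_\varepsilon(d)=1$; under Assumption II the analogous statement holds for $R_\varepsilon$ on $[c,q_\varepsilon^*(c)]$. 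Hence $Q_\varepsilon^{-1},R_\varepsilon^{-1}:[0,1]\to\Omega^*$ are well defined, continuous and strictly increasing.

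Next I would check well-definedness and monotonicity of the composition. Because $F(x)\in[0,1]$ and $Q_\varepsilon^{-1}([0,1])=[p_\varepsilon^*(d),d]\subset\Omega^*$, the map ${\bf s}_\varepsilon=Q_\varepsilon^{-1}\circ F$ sends $[a,b]$ into $\Omega^*$; being a composition of two strictly increasing continuous functions it is strictly increasing, while $x\mapsto 1-F(x)$ is strictly decreasing, producing the decreasing branch $Q_\varepsilon^{-1}(1-F(x))$. Strict monotonicity and continuity immediately give that ${\bf s}_\varepsilon$ is a homeomorphism onto its image, hence measurable and one-to-one. The Assumption II cases are identical with $(Q_\varepsilon,p_\varepsilon^*(d))$ replaced by $(R_\varepsilon,q_\varepsilon^*(c))$.

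The core of the argument is the mass-balance (push-forward) identity, which I would phrase directly through the distribution functions rather than through pointwise differentiation. For the increasing branch the defining relation $Q_\varepsilon({\bf s}_\varepsilon(x))=F(x)$ holds for all $x\in[a,b]$, so that for every subinterval $[\eta_1,\eta_2]\subset[p_\varepsilon^*(d),d]$,
\[
\int_{{\bf s}_\varepsilon^{-1}([\eta_1,\eta_2])} f^+\,dx = F\big({\bf s}_\varepsilon^{-1}(\eta_2)\big) - F\big({\bf s}_\varepsilon^{-1}(\eta_1)\big) = Q_\varepsilon(\eta_2)-Q_\varepsilon(\eta_1) = \int_{\eta_1}^{\eta_2}\bar{u}_\varepsilon\,dy,
\]
which is precisely the statement that ${\bf s}_\varepsilon$ transports $f^+$ to $\bar{u}_\varepsilon$; by a monotone-class argument the same equality extends to all Borel subsets, so ${\bf s}_\varepsilon\in\mathscr{N}$ with target $\bar{u}_\varepsilon$. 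The decreasing branch is handled identically using the survival function, the defining identity becoming $Q_\varepsilon({\bf s}_\varepsilon(x))=1-F(x)$.

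Finally I would match orientation to Assumption III and pass to the limit. The increasing rearrangement sends $a\mapsto Q_\varepsilon^{-1}(0)=p_\varepsilon^*(d)$ and $b\mapsto Q_\varepsilon^{-1}(1)=d$, realizing the case ${\bf s}^*(b)=d$, whereas the decreasing rearrangement sends $a\mapsto d$ and $b\mapsto p_\varepsilon^*(d)$, realizing ${\bf s}^*(a)=d$; since Assumption III forces the optimal map to be strictly monotone with exactly one of these boundary values, the corresponding rearrangement is the unique admissible strictly monotone transport of $f^+$ to $\bar{u}_\varepsilon$, hence the desired approximating map, and letting $\varepsilon\to 0^+$ with the convergence $\bar{u}_\varepsilon\to f^-$ from Theorem 1.2 recovers the optimal mapping ${\bf s}^*$. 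The step I expect to be the main obstacle is the push-forward verification near the support endpoints: because $\bar{u}_\varepsilon$ vanishes at $p_\varepsilon^*(d)$ and $d$, the derivative ${\bf s}_\varepsilon'=f^+/\bar{u}_\varepsilon({\bf s}_\varepsilon)$ blows up as $x\to a,b$, so a naive Jacobian change of variables is not licit and the mass balance must be argued through the distribution-function identity as above, together with a careful justification that ${\bf s}_\varepsilon$ remains an admissible element of $\mathscr{N}$ despite this boundary singularity.
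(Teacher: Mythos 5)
Your proposal is correct and is essentially the paper's own argument: the paper simply writes down the separable ODE $\bar{u}_\varepsilon(\mathbf{s}_\varepsilon)\,\mathrm{d}\mathbf{s}_\varepsilon=\pm f^+(x)\,\mathrm{d}x$ with the boundary conditions dictated by Assumption III, whose integrated form is exactly your identity $Q_\varepsilon(\mathbf{s}_\varepsilon(x))=F(x)$ (resp.\ $=1-F(x)$), yielding $\mathbf{s}_\varepsilon=Q_\varepsilon^{-1}\circ F$. Your distribution-function formulation of the mass balance is a slightly more careful rendering of the same step, and your observation about the degeneracy of $\bar{u}_\varepsilon$ at the support endpoints is a legitimate refinement of a point the paper passes over silently.
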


The rest of the paper is organized as follows. In Section 2, first
we introduce some useful notations which will simplify the proof
considerably. Then we apply the canonical dual transformation to
deduce a sequence of perfect dual problems
($\mathcal{P}^{(\varepsilon)}_d$), corresponding to
$(\mathcal{P}^{(\varepsilon)})$ and a pure complementary energy
principle. Next we apply the canonical duality theory to prove
Theorem 1.1, 1.2 and 1.3. A few remarks will conclude our discussion.\\

\section{Proof of the main results}
\subsection{Useful notations}
Before proving the main results, first and foremost, we introduce
some useful
notations.\\

\begin{itemize}
\item $\theta_\varepsilon$ is the corresponding G\^{a}teaux derivative of
$H^{(\varepsilon)}$ with respect to $u_{\varepsilon,y}$ given by
$$\theta_\varepsilon(y)={\rm e}^{(u_{\varepsilon,y}^2-\alpha^2)/(2\varepsilon)}u_{\varepsilon,y}.$$
\item $\Phi^{(\varepsilon)}$ is a nonlinear
geometric mapping given by
$$\Phi^{(\varepsilon)}(u_\varepsilon):=(u_{\varepsilon,y}^2-\alpha^2)/(2\varepsilon).$$
For convenience's sake, denote
$$\xi_\varepsilon:=\Phi^{(\varepsilon)}(u_\varepsilon).$$ It is evident that $\xi_\varepsilon$ belongs to the
function space $\mathscr{U}$ given by
$$\mathscr{U}:=
\Big\{\phi\ \Big|\ \phi\leq 0\Big\}.$$
\item $\Psi^{(\varepsilon)}$ is a canonical energy
defined as $$\Psi^{(\varepsilon)}(\xi_\varepsilon):=\varepsilon{\rm
e}^{\xi_\varepsilon},$$ which is a convex function with respect to
$\xi_\varepsilon$.
\item $\zeta_\varepsilon$ is the corresponding G\^{a}teaux derivative of
$\Psi^{(\varepsilon)}$ with respect to $\xi_\varepsilon$ given by
$$\zeta_\varepsilon=\varepsilon{\rm e}^{\xi_\varepsilon},$$ which is invertible with respect to
$\xi_\varepsilon$ and belongs to the function space
$\mathscr{V}^{(\varepsilon)}$,
$$\mathscr{V}^{(\varepsilon)}:=\Big\{\phi\ \Big|\ 0<\phi\leq \varepsilon\Big\}.$$
\item
$\Psi^{(\varepsilon)}_\ast$ is defined as
\[
\Psi^{(\varepsilon)}_\ast(\zeta_\varepsilon):=\xi_\varepsilon\zeta_\varepsilon-\Psi^{(\varepsilon)}(\xi_\varepsilon)=\zeta_\varepsilon(\ln(\zeta_\varepsilon/\varepsilon)-1).
\]
\item $\lambda_\varepsilon$ is defined as $$\lambda_\varepsilon:=\zeta_\varepsilon/\varepsilon,$$ and belongs
to the function space $\mathscr{V}$,
$$\mathscr{V}:=\Big\{\phi\ \Big|\ 0<\phi\leq 1\Big\}.$$
\end{itemize}
\subsection{Canonical duality techniques}
\begin{defi}
By Legendre transformation, one defines a Gao-Strang total
complementary energy functional $\Xi^{(\varepsilon)}$,
\[
\Xi^{(\varepsilon)}(u_\varepsilon,\zeta_\varepsilon):=\displaystyle\int_{U^{(\varepsilon)}}\Big\{\Phi^{(\varepsilon)}(u_\varepsilon)\zeta_\varepsilon-\Psi^{(\varepsilon)}_\ast(\zeta_\varepsilon)
-|y|u_\varepsilon\Big\}dy.
\]
\end{defi}
Next we introduce an important {\it criticality criterium} for
$\Xi^{(\varepsilon)}$.
\begin{defi}
$(\bar{u}_\varepsilon, \bar{\zeta}_\varepsilon)$ is called a
critical pair of $\Xi^{(\varepsilon)}$ if and only if \beq
D_{u_\varepsilon}\Xi^{(\varepsilon)}(\bar{u}_\varepsilon,\bar{\zeta}_\varepsilon)=0,
\eneq and \beq
D_{\zeta_\varepsilon}\Xi^{(\varepsilon)}(\bar{u}_\varepsilon,\bar{\zeta}_\varepsilon)=0,
\eneq where $D_{u_\varepsilon}, D_{\zeta_\varepsilon}$ denote the
partial G\^ateaux derivatives of $\Xi^{(\varepsilon)}$,
respectively.
\end{defi} Indeed, by variational calculus, we have the following
observation from (11) and (12).
\begin{lem}
On the one hand, for any fixed
$\zeta_\varepsilon\in\mathscr{V}^{(\varepsilon)}$, $(11)$ is
equivalent to the equilibrium equation
\[
\begin{array}{ll}\displaystyle (\lambda_\varepsilon \bar{u}_{\varepsilon,y})_y+|y|=0,& \
\text{\rm in}\ U^{(\varepsilon)}.\end{array}
\]
On the other hand, for any fixed $u_\varepsilon$ satisfying (2)-(5),
(12) is consistent with the constructive law
\[
\Phi^{(\varepsilon)}(u_\varepsilon)=D_{\zeta_\varepsilon}\Psi^{(\varepsilon)}_\ast(\bar{\zeta}_\varepsilon).
\]
\end{lem}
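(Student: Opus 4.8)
The plan is to verify the two claimed equivalences by directly computing the partial Gâteaux derivatives of the Gao--Strang functional $\Xi^{(\varepsilon)}$ defined in Definition 2.1, and then interpreting the vanishing conditions (11) and (12) through the notational dictionary assembled in Section 2.1. Recall that
\[
\Xi^{(\varepsilon)}(u_\varepsilon,\zeta_\varepsilon)=\int_{U^{(\varepsilon)}}\Big\{\Phi^{(\varepsilon)}(u_\varepsilon)\zeta_\varepsilon-\Psi^{(\varepsilon)}_\ast(\zeta_\varepsilon)-|y|u_\varepsilon\Big\}dy,
\]
where $\Phi^{(\varepsilon)}(u_\varepsilon)=(u_{\varepsilon,y}^2-\alpha^2)/(2\varepsilon)$. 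The two assertions are of genuinely different character: the first differentiates in $u_\varepsilon$ with $\zeta_\varepsilon$ held fixed and produces a differential equation after integration by parts, whereas the second differentiates in $\zeta_\varepsilon$ with $u_\varepsilon$ held fixed and produces a pointwise algebraic relation. I would prove them separately.

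\emph{First assertion.} For fixed $\zeta_\varepsilon\in\mathscr{V}^{(\varepsilon)}$, I would compute the directional derivative $\tfrac{d}{dt}\big|_{t=0}\Xi^{(\varepsilon)}(u_\varepsilon+t\varphi,\zeta_\varepsilon)$ against an arbitrary admissible test function $\varphi$ vanishing on $\partial U^{(\varepsilon)}$. Only the terms $\Phi^{(\varepsilon)}(u_\varepsilon)\zeta_\varepsilon$ and $-|y|u_\varepsilon$ depend on $u_\varepsilon$; since $\Phi^{(\varepsilon)}$ depends on $u_\varepsilon$ solely through $u_{\varepsilon,y}$, the chain rule gives a contribution $\int_{U^{(\varepsilon)}}\frac{u_{\varepsilon,y}}{\varepsilon}\zeta_\varepsilon\,\varphi_y\,dy$, while the linear term contributes $-\int_{U^{(\varepsilon)}}|y|\varphi\,dy$. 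Integrating the first term by parts (the boundary term drops because $\varphi$ vanishes on $\partial U^{(\varepsilon)}$) and invoking the definition $\lambda_\varepsilon:=\zeta_\varepsilon/\varepsilon$ from Section 2.1, the requirement that this hold for all $\varphi$ yields, by the fundamental lemma of the calculus of variations, the pointwise equation $(\lambda_\varepsilon\bar{u}_{\varepsilon,y})_y+|y|=0$ in $U^{(\varepsilon)}$, exactly as claimed.

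\emph{Second assertion.} For fixed $u_\varepsilon$ satisfying (2)--(5), the dependence on $\zeta_\varepsilon$ is purely algebraic and local, so I would simply set the integrand's $\zeta_\varepsilon$-derivative to zero. Differentiating $\Phi^{(\varepsilon)}(u_\varepsilon)\zeta_\varepsilon-\Psi^{(\varepsilon)}_\ast(\zeta_\varepsilon)$ in $\zeta_\varepsilon$ gives $\Phi^{(\varepsilon)}(u_\varepsilon)-D_{\zeta_\varepsilon}\Psi^{(\varepsilon)}_\ast(\bar{\zeta}_\varepsilon)=0$, which is precisely the stated constitutive law. To confirm internal consistency I would then check, using $\Psi^{(\varepsilon)}_\ast(\zeta_\varepsilon)=\zeta_\varepsilon(\ln(\zeta_\varepsilon/\varepsilon)-1)$, that $D_{\zeta_\varepsilon}\Psi^{(\varepsilon)}_\ast(\bar{\zeta}_\varepsilon)=\ln(\bar{\zeta}_\varepsilon/\varepsilon)=\ln\bar{\lambda}_\varepsilon$, so the constitutive law reads $\ln\bar{\lambda}_\varepsilon=(\bar{u}_{\varepsilon,y}^2-\alpha^2)/(2\varepsilon)$, i.e. $\bar\lambda_\varepsilon=\mathrm{e}^{(\bar u_{\varepsilon,y}^2-\alpha^2)/(2\varepsilon)}$; substituting this back into the first assertion recovers the original Euler--Lagrange equation (10), which is the sanity check that the whole canonical-duality bookkeeping is coherent.

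\textbf{The main obstacle} I anticipate is not the formal computation but justifying the differentiation rigorously in the function-space setting: one must confirm that the Gâteaux derivatives exist on the admissible class and that the test functions $\varphi$ used in the first part genuinely range over a set large enough to apply the fundamental lemma while still respecting the constraints (2)--(5) and the Dirichlet condition on the a priori unknown support $U^{(\varepsilon)}$. Care is also needed to ensure that the perturbed competitor $u_\varepsilon+t\varphi$ stays within the admissible class $\xi_\varepsilon\in\mathscr{U}$ (equivalently $|u_{\varepsilon,y}|\le\alpha$) for small $t$, so that the formal first-order condition is actually the correct variational characterization rather than being obstructed by an active constraint; handling this boundary/constraint interplay is where the real work lies, the rest being routine calculus.
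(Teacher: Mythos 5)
Your computation is correct and is exactly the "variational calculus" the paper invokes without writing out: the paper states Lemma 2.3 with no proof, and your two Gâteaux-derivative calculations (integration by parts for the $u_\varepsilon$-variation, the pointwise algebraic condition $\Phi^{(\varepsilon)}(u_\varepsilon)=D_{\zeta_\varepsilon}\Psi^{(\varepsilon)}_\ast(\bar\zeta_\varepsilon)$ for the $\zeta_\varepsilon$-variation) are the intended argument, and your sanity check recovering (10) matches the paper's remark following the lemma.
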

Lemma 2.3 indicates that $\bar{u}_\varepsilon$ from the critical
pair $(\bar{u}_\varepsilon,\bar{\zeta}_\varepsilon)$ solves the
Euler-Lagrange equation (10).
\begin{defi}
From Definition 2.1, one defines the Gao-Strang pure complementary
energy $I^{(\varepsilon)}_d$ in the form
\[
I^{(\varepsilon)}_d[\zeta_\varepsilon]:=\Xi^{(\varepsilon)}(\bar{u}_\varepsilon,\zeta_\varepsilon),
\]
where $\bar{u}_\varepsilon$ solves the Euler-Lagrange equation (10).
\end{defi}
For convenience's sake, we give another representation of the pure
energy $I^{(\varepsilon)}_d$ by the following lemma.
\begin{lem} The
pure complementary energy functional $I^{(\varepsilon)}_d$ can be
rewritten as
\[
I^{(\varepsilon)}_d[\zeta_\varepsilon]=-1/2\int_{U^{(\varepsilon)}}\Big\{{\varepsilon\theta_\varepsilon^2/\zeta_\varepsilon}+\alpha^2\zeta_\varepsilon/\varepsilon+2\zeta_\varepsilon(\ln(\zeta_\varepsilon/\varepsilon)-1)\Big\}dy,
\]
where $\theta_\varepsilon$ satisfies \beq
\theta_{\varepsilon,y}+|y|=0\ \text{in}\ U^{(\varepsilon)}, \eneq
equipped with a hidden boundary condition.
\end{lem}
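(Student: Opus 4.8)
The plan is to substitute the definitions of Section~2.1 directly into $\Xi^{(\varepsilon)}(\bar u_\varepsilon,\zeta_\varepsilon)$, convert the gradient-dependent term by the algebraic relation tying together $\theta_\varepsilon$, $\zeta_\varepsilon$ and $\bar u_{\varepsilon,y}$, and then dispose of the remaining cross term by an integration by parts driven by the equilibrium equation of Lemma~2.3. First I would expand $I^{(\varepsilon)}_d[\zeta_\varepsilon]=\Xi^{(\varepsilon)}(\bar u_\varepsilon,\zeta_\varepsilon)$ using $\Phi^{(\varepsilon)}(\bar u_\varepsilon)=(\bar u_{\varepsilon,y}^2-\alpha^2)/(2\varepsilon)$ and $\Psi^{(\varepsilon)}_\ast(\zeta_\varepsilon)=\zeta_\varepsilon(\ln(\zeta_\varepsilon/\varepsilon)-1)$. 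This splits the integrand into four pieces: $\bar u_{\varepsilon,y}^2\zeta_\varepsilon/(2\varepsilon)$, the term $-\alpha^2\zeta_\varepsilon/(2\varepsilon)$, the term $-\zeta_\varepsilon(\ln(\zeta_\varepsilon/\varepsilon)-1)$, and the coupling term $-|y|\bar u_\varepsilon$. The middle two already coincide, up to the global factor $-1/2$, with the corresponding entries of the target formula, so the entire burden falls on reconciling the first and fourth pieces.

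Next I would invoke the notational identities. Since $\zeta_\varepsilon=\varepsilon{\rm e}^{\xi_\varepsilon}$ one has ${\rm e}^{\xi_\varepsilon}=\zeta_\varepsilon/\varepsilon=\lambda_\varepsilon$, whence $\theta_\varepsilon={\rm e}^{\xi_\varepsilon}\bar u_{\varepsilon,y}=\lambda_\varepsilon\bar u_{\varepsilon,y}$, that is $\bar u_{\varepsilon,y}=\varepsilon\theta_\varepsilon/\zeta_\varepsilon$. Substituting this into $\bar u_{\varepsilon,y}^2\zeta_\varepsilon/(2\varepsilon)$ produces exactly $\varepsilon\theta_\varepsilon^2/(2\zeta_\varepsilon)$. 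At this stage the claimed representation reduces to the single scalar identity $\int_{U^{(\varepsilon)}}\varepsilon\theta_\varepsilon^2/\zeta_\varepsilon\,dy=\int_{U^{(\varepsilon)}}|y|\bar u_\varepsilon\,dy$, for this turns the surviving combination $\varepsilon\theta_\varepsilon^2/(2\zeta_\varepsilon)-|y|\bar u_\varepsilon$ into $-\varepsilon\theta_\varepsilon^2/(2\zeta_\varepsilon)$, matching the remaining entry of the target with the correct sign.

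The crux, and the only place where anything beyond bookkeeping occurs, is this last identity. I would first rewrite $\varepsilon\theta_\varepsilon^2/\zeta_\varepsilon=\theta_\varepsilon\bar u_{\varepsilon,y}$, using $\theta_\varepsilon=\lambda_\varepsilon\bar u_{\varepsilon,y}$ and $\zeta_\varepsilon=\varepsilon\lambda_\varepsilon$, and then integrate by parts over $U^{(\varepsilon)}$. The boundary term vanishes because $\bar u_\varepsilon=0$ on $\partial U^{(\varepsilon)}$, since $U^{(\varepsilon)}$ is the support of $\bar u_\varepsilon$ carrying the Dirichlet condition; and the equilibrium equation $(\lambda_\varepsilon\bar u_{\varepsilon,y})_y+|y|=0$ of Lemma~2.3 gives precisely $\theta_{\varepsilon,y}=-|y|$, which is the asserted relation~(13) together with the ``hidden'' boundary condition that pins down the additive constant of this first-order ODE. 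Hence $\int_{U^{(\varepsilon)}}\theta_\varepsilon\bar u_{\varepsilon,y}\,dy=-\int_{U^{(\varepsilon)}}\theta_{\varepsilon,y}\bar u_\varepsilon\,dy=\int_{U^{(\varepsilon)}}|y|\bar u_\varepsilon\,dy$, which closes the identity and therefore the lemma. The one point demanding care in the writeup is the justification that the boundary contribution drops out: it rests on the compact-support Dirichlet condition on $\bar u_\varepsilon$ rather than on any decay of $\theta_\varepsilon$ itself, and this is exactly where the phrase ``equipped with a hidden boundary condition'' must be made precise.
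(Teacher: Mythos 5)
Your argument is correct and is essentially the paper's own proof read in the opposite order: the paper packages the quantity $\int_{U^{(\varepsilon)}}\{(\zeta_\varepsilon\bar u_{\varepsilon,y}/\varepsilon)_y+|y|\}\bar u_\varepsilon\,dy$ up front as a block that vanishes by the Euler--Lagrange equation, which after integration by parts is exactly your reduction of the cross term $-|y|\bar u_\varepsilon$ via $\theta_{\varepsilon,y}=-|y|$ and the Dirichlet condition on $\partial U^{(\varepsilon)}$. The algebraic substitution $\bar u_{\varepsilon,y}=\varepsilon\theta_\varepsilon/\zeta_\varepsilon$ and the vanishing of the boundary term are used identically in both, so no further comparison is needed.
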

\begin{proof}
Through integrating by parts, one has
\[
\begin{array}{lll}
I^{(\varepsilon)}_d[\zeta_\varepsilon]&=&\displaystyle-\underbrace{\int_{U^{(\varepsilon)}}\Big\{(\zeta_\varepsilon
\bar{u}_{\varepsilon,y}/\varepsilon)_y+|y|\Big\}\bar{u}_\varepsilon dy}_{(I)}\\
\\
&&-\underbrace{1/2\int_{U^{(\varepsilon)}}\Big\{\zeta_\varepsilon\bar{u}_{\varepsilon,y}^2/\varepsilon+\alpha^2\zeta_\varepsilon/\varepsilon+2\zeta_\varepsilon(\ln(\zeta_\varepsilon/\varepsilon)-1)\Big\}dy.}_{(II)}\\
\\
\end{array}
\]
Since $\bar{u}_\varepsilon$ solves the Euler-Lagrange equation (10),
then the first part $(I)$ disappears. Keeping in mind the definition
of $\theta_\varepsilon$ and $\zeta_\varepsilon$, one reaches the
conclusion.
\end{proof}
With the above discussion, next we establish a sequence of dual
variational problems to the approximation problems
($\mathcal{P}^{(\varepsilon)}$).
\begin{equation}
(\mathcal{P}_d^{(\varepsilon)}):\displaystyle\max_{\zeta_\varepsilon\in\mathscr{V}^{(\varepsilon)}}\Big\{I^{(\varepsilon)}_d[\zeta_\varepsilon]=-1/2\int_{U^{(\varepsilon)}}\Big\{{\varepsilon\theta_\varepsilon^2/\zeta_\varepsilon}+\alpha^2\zeta_\varepsilon/\varepsilon+2\zeta_\varepsilon(\ln(\zeta_\varepsilon/\varepsilon)-1)\Big\}dy\Big\}.
\end{equation}
Indeed, by calculating the G\^{a}teaux derivative of
$I_d^{(\varepsilon)}$ with respect to $\zeta_\varepsilon$, one has
\begin{lem} The variation of $I_d^{(\varepsilon)}$ with respect to $\zeta_\varepsilon$
leads to the dual algebraic equation (DAE), namely, \beq
\theta_\varepsilon^2={\bar{\zeta}_\varepsilon}^2(2\ln(\bar{\zeta}_\varepsilon/\varepsilon)+\alpha^2/\varepsilon)/\varepsilon,
\eneq where $\bar{\zeta}_\varepsilon$ is from the critical pair
$(\bar{u}_\varepsilon,\bar{\zeta}_\varepsilon)$.
\end{lem}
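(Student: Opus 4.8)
The plan is to compute the first variation of the pure complementary energy $I_d^{(\varepsilon)}$ in the explicit representation furnished by Lemma 2.5 and to extract the stationarity condition. The decisive structural point is that the flux $\theta_\varepsilon$ appearing there is \emph{independent} of $\zeta_\varepsilon$: by Lemma 2.5 it is governed by the first-order equation $\theta_{\varepsilon,y}+|y|=0$ together with its hidden boundary condition, a relation forced only by the datum $|y|$ and not by $\zeta_\varepsilon$. (This freezing of $\theta_\varepsilon$ is precisely the canonical-duality mechanism: it reflects that $\bar{u}_\varepsilon$ is critical in $u_\varepsilon$, so the chain-rule contribution through $\bar{u}_\varepsilon$ drops out by (11).) Consequently the only $\zeta_\varepsilon$-dependence left in the integrand is the explicit one, which is why the resulting Euler--Lagrange condition is \emph{algebraic} rather than differential, in sharp contrast with the primal equation (10).

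Concretely, I would set
\[
g(\zeta_\varepsilon):=\frac{\varepsilon\theta_\varepsilon^2}{\zeta_\varepsilon}+\frac{\alpha^2\zeta_\varepsilon}{\varepsilon}+2\zeta_\varepsilon\big(\ln(\zeta_\varepsilon/\varepsilon)-1\big),
\]
so that $I_d^{(\varepsilon)}[\zeta_\varepsilon]=-\tfrac12\int_{U^{(\varepsilon)}}g(\zeta_\varepsilon)\,dy$, and take an admissible variation $\zeta_\varepsilon+t\eta$ with $\eta$ supported in $U^{(\varepsilon)}$. Differentiating in $t$ at $t=0$ under the integral sign (licit by dominated convergence, since the critical $\zeta_\varepsilon$ is continuous and strictly positive on the compact support $U^{(\varepsilon)}$, so $1/\zeta_\varepsilon$ and $\ln(\zeta_\varepsilon/\varepsilon)$ stay bounded) and applying the fundamental lemma of the calculus of variations reduces $D_{\zeta_\varepsilon}I_d^{(\varepsilon)}=0$ to the pointwise identity $g'(\bar{\zeta}_\varepsilon)=0$.

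A direct differentiation gives
\[
g'(\zeta_\varepsilon)=-\frac{\varepsilon\theta_\varepsilon^2}{\zeta_\varepsilon^2}+\frac{\alpha^2}{\varepsilon}+2\ln(\zeta_\varepsilon/\varepsilon),
\]
the constant $-2$ produced by the factor $-1$ being cancelled by the $+2$ from differentiating $\zeta_\varepsilon\ln(\zeta_\varepsilon/\varepsilon)$. Setting $g'(\bar{\zeta}_\varepsilon)=0$ and multiplying through by $\bar{\zeta}_\varepsilon^2/\varepsilon$ isolates $\theta_\varepsilon^2$ and yields exactly
\[
\theta_\varepsilon^2=\bar{\zeta}_\varepsilon^2\big(2\ln(\bar{\zeta}_\varepsilon/\varepsilon)+\alpha^2/\varepsilon\big)/\varepsilon,
\]
which is the asserted dual algebraic equation.

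I do not foresee a genuine obstacle, as the computation collapses to a pointwise differentiation once the freezing of $\theta_\varepsilon$ is in hand; the two points deserving care are the $\zeta_\varepsilon$-independence of $\theta_\varepsilon$ (which rests on Lemma 2.5) and the check that the root $\bar{\zeta}_\varepsilon$ of the algebraic equation lies in the interior of $\mathscr{V}^{(\varepsilon)}=\{0<\phi\le\varepsilon\}$, so that the unconstrained stationarity truly represents the constrained maximization and no multiplier terms intrude. Finally, since $g''(\zeta_\varepsilon)=2\varepsilon\theta_\varepsilon^2/\zeta_\varepsilon^3+2/\zeta_\varepsilon>0$, the functional $I_d^{(\varepsilon)}=-\tfrac12\int g$ is concave in $\zeta_\varepsilon$, so this critical point is the unique maximizer, in harmony with $(\mathcal{P}_d^{(\varepsilon)})$ being a maximization problem.
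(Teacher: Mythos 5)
Your proposal is correct and follows essentially the same route as the paper, which simply asserts the DAE "by calculating the G\^ateaux derivative of $I_d^{(\varepsilon)}$ with respect to $\zeta_\varepsilon$" applied to the representation of Lemma 2.5; your pointwise computation $g'(\bar{\zeta}_\varepsilon)=0$ (with $\theta_\varepsilon$ frozen by $\theta_{\varepsilon,y}+|y|=0$) is exactly that calculation, and your sign of $g''$ matches the paper's second variation (22).
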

Taking into account the notation of $\lambda_\varepsilon$,  the
identity (15) can be rewritten as \beq
\theta_\varepsilon^2=E_\varepsilon(\lambda_\varepsilon)={\lambda}_\varepsilon^2\ln({\rm
e}^{\alpha^2}{\lambda}_\varepsilon^{2\varepsilon}). \eneq It is
evident $E_\varepsilon$ is monotonously increasing with respect to
$\lambda_\varepsilon\in[{\rm e}^{-\alpha^2/(2\varepsilon)},1]$. As a
matter of fact, $\theta_\varepsilon^2$ has the following asymptotic
expansion by using Taylor's expansion formula for
$\ln\lambda_\varepsilon$ at the point 1.
\begin{lem}
When $\varepsilon$ is sufficiently small, $\theta_\varepsilon^2$ has
the asymptotic expansion,
$$\theta_\varepsilon^2=(\alpha^2-2\varepsilon)\lambda_\varepsilon^2+2\varepsilon\lambda_\varepsilon^3+R_\varepsilon(\lambda_\varepsilon),$$
where the remainder term
\[
|R_\varepsilon(\lambda_\varepsilon)|\leq \varepsilon
\] uniformly for any $\lambda_\varepsilon\in[{\rm e}^{-\alpha^2/(2\varepsilon)},1]$.
\end{lem}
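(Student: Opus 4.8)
The plan is to start from the dual algebraic equation in its reduced form (16) and isolate the single genuinely nonlinear term before invoking Taylor's theorem. First I would expand the logarithm occurring in $E_\varepsilon$: since $\ln({\rm e}^{\alpha^2}\lambda_\varepsilon^{2\varepsilon})=\alpha^2+2\varepsilon\ln\lambda_\varepsilon$, the identity (16) becomes
\[
\theta_\varepsilon^2=\alpha^2\lambda_\varepsilon^2+2\varepsilon\lambda_\varepsilon^2\ln\lambda_\varepsilon.
\]
In this form the whole task reduces to controlling the product $\lambda_\varepsilon^2\ln\lambda_\varepsilon$, since the term $\alpha^2\lambda_\varepsilon^2$ will supply the polynomial pieces of the asserted expansion.

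Next I would apply Taylor's formula with the Lagrange form of the remainder to $f(\lambda)=\ln\lambda$, expanded to first order about the base point $\lambda=1$. Using $f(1)=0$, $f'(1)=1$ and $f''(\lambda)=-\lambda^{-2}$, one obtains
\[
\ln\lambda_\varepsilon=(\lambda_\varepsilon-1)-\frac{(\lambda_\varepsilon-1)^2}{2\eta^2}
\]
for some $\eta$ lying between $\lambda_\varepsilon$ and $1$. Substituting this into the identity above and regrouping yields
\[
\theta_\varepsilon^2=(\alpha^2-2\varepsilon)\lambda_\varepsilon^2+2\varepsilon\lambda_\varepsilon^3+R_\varepsilon(\lambda_\varepsilon),\qquad R_\varepsilon(\lambda_\varepsilon)=-\varepsilon\,\frac{\lambda_\varepsilon^2(\lambda_\varepsilon-1)^2}{\eta^2},
\]
which already exhibits precisely the claimed leading terms $(\alpha^2-2\varepsilon)\lambda_\varepsilon^2+2\varepsilon\lambda_\varepsilon^3$.

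The remaining and most delicate step is the uniform bound on $R_\varepsilon$ over the full interval $[{\rm e}^{-\alpha^2/(2\varepsilon)},1]$, whose left endpoint degenerates to $0$ as $\varepsilon\to0^+$; a naive estimate breaks down there because $\ln\lambda_\varepsilon$ is unbounded near $0$. The key point is that the Lagrange remainder packages this singularity favorably: since $\lambda_\varepsilon\le1$ forces $\eta\in[\lambda_\varepsilon,1]$, we have $\eta\ge\lambda_\varepsilon$, hence $\lambda_\varepsilon^2/\eta^2\le1$, while $(\lambda_\varepsilon-1)^2\le1$ throughout $(0,1]$. Multiplying these two bounds gives $|R_\varepsilon(\lambda_\varepsilon)|\le\varepsilon$ uniformly, as required; the bound itself needs no smallness of $\varepsilon$, the hypothesis serving only to make the two displayed terms the genuine leading order of the expansion. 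As an independent check one may study $h(\lambda):=2\lambda^2(\lambda-1-\ln\lambda)=|R_\varepsilon(\lambda)|/\varepsilon$ directly: its only interior critical point solves $3\lambda-3=2\ln\lambda$, at which $h$ reduces to $\lambda_0^2(1-\lambda_0)$ with $\lambda_0\approx0.42$, so that $\sup h\approx0.102<1$ and the estimate in fact holds with room to spare.
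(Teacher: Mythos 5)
Your proof is correct and follows exactly the route the paper indicates---Taylor expansion of $\ln\lambda_\varepsilon$ about the point $1$ applied to the identity $\theta_\varepsilon^2=\alpha^2\lambda_\varepsilon^2+2\varepsilon\lambda_\varepsilon^2\ln\lambda_\varepsilon$---and it supplies the uniform estimate $\lambda_\varepsilon^2/\eta^2\le 1$, $(\lambda_\varepsilon-1)^2\le 1$ on the Lagrange remainder that the paper leaves as a one-line remark without a written-out proof. No issues.
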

\subsection{Proof of Theorem 1.1}
From the above discussion, one deduces that, once
$\theta_\varepsilon$ is given, then the analytic solution of the
Euler-Lagrange equation (10) can be represented as \beq
\bar{u}_\varepsilon(y)=\displaystyle\int^{y}_{y_0}\eta_\varepsilon(t)dt,
\eneq where $y\in U^{(\varepsilon)}, y_0\in\partial
U^{(\varepsilon)}$,
$\eta_\varepsilon:=\theta_\varepsilon/\lambda_\varepsilon$. In the
following, we will determine the support $U^{(\varepsilon)}$. First
and foremost, we prove several useful lemmas.
\begin{lem}
For $\forall\ \varepsilon>0$, \begin{itemize}
\item Under Assumption I, $\forall\ s\in[c,d)$, there exists a
unique solution $\bar{u}_{\varepsilon}\in C^\infty[s,d]$ of the
Euler-Lagrange equation (10) with Dirichlet boundary in the form of
(17). \item Under Assumption II, for $\forall\ t\in(c,d]$, there
exists a unique solution $\bar{u}_{\varepsilon}\in C^\infty[c,t]$ of
the Euler-Lagrange equation (10) with Dirichlet boundary in the form
of (17).
\end{itemize}
\end{lem}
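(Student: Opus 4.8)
The plan is to invert the canonical reduction already carried out above. By (13) the flux $\theta_\varepsilon=\lambda_\varepsilon\bar u_{\varepsilon,y}$ obeys the linear first-order equation $\theta_{\varepsilon,y}+|y|=0$; the dual algebraic equation (16) determines $\lambda_\varepsilon$ implicitly through $\theta_\varepsilon^2=E_\varepsilon(\lambda_\varepsilon)$; and (17) recovers $\bar u_\varepsilon$ from $\bar u_{\varepsilon,y}=\eta_\varepsilon=\theta_\varepsilon/\lambda_\varepsilon$. So the construction splits into (i) solving for $\theta_\varepsilon$, (ii) inverting $E_\varepsilon$, and (iii) fixing the two free constants from the Dirichlet data. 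I would treat Assumption I in full and obtain Assumption II by the reflection $y\mapsto-y$. Under Assumption I one has $c\ge 0$, hence $|y|=y$ on $[s,d]$, so integrating the flux equation gives $\theta_\varepsilon(y)=C_\varepsilon-G(y)$ with $G(y)=y^2/2$ and a single constant $C_\varepsilon$ still to be chosen.

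First I would pin down the mapping properties of $E_\varepsilon$. Writing $E_\varepsilon(\gamma)=\gamma^2(\alpha^2+2\varepsilon\ln\gamma)$, one computes $E_\varepsilon'(\gamma)=2\gamma(\alpha^2+\varepsilon+2\varepsilon\ln\gamma)$, and on $[{\rm e}^{-\alpha^2/(2\varepsilon)},1]$ the bracket lies in $[\varepsilon,\alpha^2+\varepsilon]$, so $E_\varepsilon'\ge 2\varepsilon\,{\rm e}^{-\alpha^2/(2\varepsilon)}>0$. Together with $E_\varepsilon({\rm e}^{-\alpha^2/(2\varepsilon)})=0$ and $E_\varepsilon(1)=\alpha^2$, this shows $E_\varepsilon$ is a smooth strictly increasing bijection of $[{\rm e}^{-\alpha^2/(2\varepsilon)},1]$ onto $[0,\alpha^2]$; since $E_\varepsilon'$ never vanishes, the inverse function theorem gives $E_\varepsilon^{-1}\in C^\infty[0,\alpha^2]$ with values bounded below by ${\rm e}^{-\alpha^2/(2\varepsilon)}>0$. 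Consequently, as long as $\theta_\varepsilon^2(y)\le\alpha^2$ on $[s,d]$, the function $\lambda_\varepsilon(y)=E_\varepsilon^{-1}(\theta_\varepsilon^2(y))$ is smooth and strictly positive, and $\eta_\varepsilon=\theta_\varepsilon/\lambda_\varepsilon$ is $C^\infty$ on $[s,d]$ (a zero of $\theta_\varepsilon$ causes no singularity, the denominator staying $\ge{\rm e}^{-\alpha^2/(2\varepsilon)}$). Integrating as in (17) with base point $y_0=d$ then produces $\bar u_\varepsilon\in C^\infty[s,d]$ solving (10) and satisfying $\bar u_\varepsilon(d)=0$.

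It remains to fix $C_\varepsilon$ so that the second Dirichlet condition $\bar u_\varepsilon(s)=0$, i.e. $\int_s^d\eta_\varepsilon(t;C)\,dt=0$, holds. Since $\eta_\varepsilon$ carries the sign of $\theta_\varepsilon(y)=C-G(y)$, which is positive for $y<\sqrt{2C}$ and negative for $y>\sqrt{2C}$, I would study $\Theta(C):=\int_s^d\eta_\varepsilon(t;C)\,dt$ on $C\in(s^2/2,d^2/2)$: as $C\downarrow s^2/2$ the positive part collapses and $\Theta<0$, while as $C\uparrow d^2/2$ the negative part collapses and $\Theta>0$. Monotonicity of $\Theta$ in $C$, which follows because $E_\varepsilon$ increasing makes $\partial_C\eta_\varepsilon>0$, then yields a unique root $C_\varepsilon=C_\varepsilon(d)\in(s^2/2,d^2/2)$, and the associated interior maximum of $\bar u_\varepsilon$ at $\sqrt{2C_\varepsilon}$ gives $\bar u_\varepsilon>0$ on $(s,d)$.

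The step I expect to be the crux is feasibility: one must guarantee $\theta_\varepsilon^2\le\alpha^2$ on all of $[s,d]$ for the selected $C_\varepsilon$, uniformly in $s\in[c,d)$, for otherwise $E_\varepsilon^{-1}$ is not even defined. Because $\theta_\varepsilon$ attains its extremes $C_\varepsilon-s^2/2>0$ and $C_\varepsilon-d^2/2<0$ at the endpoints and $C_\varepsilon\in(s^2/2,d^2/2)$, both are bounded by $(d^2-s^2)/2$, so the condition $d^2-c^2\le 2\alpha$ (the worst case being $s=c$) secures $\theta_\varepsilon^2\le\alpha^2$ throughout; this is precisely the quantitative meaning of the ``$\alpha$ and $d-c$ sufficiently large'' hypothesis that the statement defers to this lemma, and making it explicit is the real content. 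Uniqueness can finally be read off independently of the construction: since $H^{(\varepsilon)}(P)=\varepsilon\,{\rm e}^{(P^2-\alpha^2)/(2\varepsilon)}$ has $(H^{(\varepsilon)})''(P)={\rm e}^{(P^2-\alpha^2)/(2\varepsilon)}(1+P^2/\varepsilon)>0$, the functional $I^{(\varepsilon)}$ is strictly convex on the Dirichlet class, so it has at most one critical point, and this forces the solution of (10) with the prescribed boundary values to be unique.
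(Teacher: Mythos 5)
Your construction is correct and follows the same route as the paper: integrate the flux equation $\theta_{\varepsilon,y}+|y|=0$ to get $\theta_\varepsilon=C-G(y)$, invert the dual algebraic relation $\theta_\varepsilon^2=E_\varepsilon(\lambda_\varepsilon)$ to obtain a smooth positive $\lambda_\varepsilon$, integrate $\eta_\varepsilon=\theta_\varepsilon/\lambda_\varepsilon$ as in (17), and fix the constant by a monotonicity/intermediate-value argument on the map $C\mapsto\int_s^d\eta_\varepsilon(\cdot;C)$, which is exactly the paper's $M_\varepsilon(r)$. Where you go beyond the paper is worth noting. First, you make explicit that $E_\varepsilon$ is a smooth increasing bijection of $[{\rm e}^{-\alpha^2/(2\varepsilon)},1]$ onto $[0,\alpha^2]$, and hence that the whole construction is only admissible when $\theta_\varepsilon^2\le\alpha^2$ on $[s,d]$; your resulting quantitative condition $d^2-c^2\le 2\alpha$ is a genuine clarification of the paper's vague ``$\alpha$ sufficiently large'' hypothesis, which the paper's own proof silently assumes when it asserts the existence of $\lambda_\varepsilon$ from (16). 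Second, your uniqueness argument via strict convexity of $P\mapsto H^{(\varepsilon)}(P)$ is cleaner and stronger than the paper's (which only gives uniqueness within the ansatz class through the unique determination of $C_\varepsilon$), since it rules out any other critical point in the Dirichlet class. One caveat: your one-line justification that $\partial_C\eta_\varepsilon>0$ ``because $E_\varepsilon$ is increasing'' undersells what must be checked --- the sign of $\lambda E_\varepsilon'(\lambda)-2E_\varepsilon(\lambda)$ is what matters, and it equals $2\varepsilon\lambda^2>0$, so the claim is true but needs that identity (the paper is equally terse at the corresponding step, simply asserting that $M_\varepsilon$ is strictly increasing).
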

\begin{proof}
{\it First case:}\\

In $[s,d]$, one has a general solution for the differential equation
$\theta_{\varepsilon,y}=-y$ in the form of
\[
\theta_\varepsilon(y)=-G(y)+C_\varepsilon=-y^2/2+C_\varepsilon,\ \
y\in[s,d]\subset(0,+\infty).
\]
From the identity (16), one sees that there exists a unique
$C^\infty$ function $\lambda_\varepsilon\in[{\rm
e}^{-\alpha^2/(2\varepsilon)},1]$. By paying attention to the
Dirichlet boundary $\bar{u}_\varepsilon(s)=0$, one has the analytic
solution $\bar{u}_\varepsilon$ in the following form,
\[
\bar{u}_\varepsilon(y)=\int^{y}_{s}\eta_\varepsilon(x)dx,\ \ \ \
y\in[s,d].
\]
Recall that
\[
\bar{u}_\varepsilon(d)=\int^{G^{-1}(C_\varepsilon)}_{s}\eta_\varepsilon(x)dx+\int^{d}_{G^{-1}(C_\varepsilon)}\eta_\varepsilon(x)dx=0,
\]
and one can determine the constant $C_\varepsilon\in(s^2/2,d^2/2)$
uniquely. Indeed, let
\[
\mu_\varepsilon(y,r):=(-G(y)+r)/\lambda_\varepsilon(y,r)
\]
and
\[
M_{\varepsilon}(r):=\int^{G^{-1}(r)}_{s}\mu_\varepsilon(y,r)dy+\int^{d}_{G^{-1}(r)}\mu_\varepsilon(y,r)dy,
\]
where $\lambda_\varepsilon(y,r)$ is from (16). It is evident that
$\lambda_\varepsilon$ depends on $C_\varepsilon$. As a matter of
fact, $M_\varepsilon$ is strictly increasing with respect to
$r\in(s^2/2,d^2/2)$, which leads to
\[
C_\varepsilon=M_{\varepsilon}^{-1}(0).
\]
In fact, $C_\varepsilon$ depends on $s$ and the contradiction method
shows that $C_\varepsilon$ is strictly increasing with respect to
$s\in[c,d)$.\\

{\it Second case:}\\

In $[c,t]$, one has a general solution for the differential equation
$\theta_{\varepsilon,y}=y$ in the form of
\[
\theta_\varepsilon(y)=G(y)-D_\varepsilon=y^2/2-D_\varepsilon,\ \
y\in[c,t]\subset(-\infty,0).
\]
From the identity (16), one sees that there exists a unique
$C^\infty$ function $\lambda_\varepsilon\in[{\rm
e}^{-\alpha^2/(2\varepsilon)},1]$. By paying attention to the
Dirichlet boundary $\bar{u}_\varepsilon(c)=0$, one has the analytic
solution $\bar{u}_\varepsilon$ in the following form,
\[
\bar{u}_\varepsilon(y)=\int^{y}_{c}\eta_\varepsilon(x)dx,\ \ \ \
y\in[c,t].
\]
Recall that
\[
\bar{u}_\varepsilon(t)=\int^{G^{-1}(D_\varepsilon)}_{c}\eta_\varepsilon(x)dx+\int^{t}_{G^{-1}(D_\varepsilon)}\eta_\varepsilon(x)dx=0,
\]
and one can determine the constant $D_\varepsilon\in(t^2/2,c^2/2)$
uniquely. Indeed, let
\[
\mu_\varepsilon(y,r):=(G(y)-r)/\lambda_\varepsilon(y,r)
\]
and
\[
N_{\varepsilon}(r):=\int^{G^{-1}(r)}_{c}\mu_\varepsilon(y,r)dy+\int^{t}_{G^{-1}(r)}\mu_\varepsilon(y,r)dy,
\]
where $\lambda_\varepsilon(y,r)$ is from (16). It is evident that
$\lambda_\varepsilon$ depends on $D_\varepsilon$. As a matter of
fact, $N_\varepsilon$ is strictly decreasing with respect to
$r\in(t^2/2,c^2/2)$, which leads to
\[
D_\varepsilon=N_{\varepsilon}^{-1}(0).
\]
In fact, $D_\varepsilon$ depends on $t$ and the contradiction method
shows that $D_\varepsilon$ is strictly decreasing with respect to
$t\in(c,d]$.\\

\end{proof}
\begin{lem}
For $\forall\ \varepsilon>0$,
\begin{itemize}
\item Under Assumption I, if $d-c$ is sufficiently large such
that
\beq\displaystyle\int_c^d\int_d^y\Big(-x^2/2+C_\varepsilon\Big)/\Big(\lambda_{\varepsilon}(x,C_\varepsilon)\Big)dxdy>1\
\text{\rm when\ $d\in$\ Supp}\bar{u}_{\varepsilon}, \eneq then there
exists a unique $p_\varepsilon^*(d)$ such that
\[
\text{\rm Supp}\bar{u}_{\varepsilon}=[p_\varepsilon^*(d),d]\
\text{and}\ \int_{p_\varepsilon^*(d)}^d\bar{u}_{\varepsilon}(y)dy=1.
\]
\item Under Assumption II, if $d-c$ is sufficiently large such
that
\beq\displaystyle\int_c^d\int_c^y\Big(x^2/2-D_\varepsilon\Big)/\Big(\lambda_{\varepsilon}(x,D_\varepsilon)\Big)dxdy>1\
\text{\rm when\ $c\in$\ Supp}\bar{u}_{\varepsilon},\eneq then there
exists a unique $q_\varepsilon^*(c)$ such that
\[
\text{\rm Supp}\bar{u}_{\varepsilon}=[c,q_{\varepsilon}^{*}(c)]\
\text{and}\ \int^{q_\varepsilon*(c)}_c\bar{u}_{\varepsilon}(y)dy=1.
\]
\end{itemize}
\end{lem}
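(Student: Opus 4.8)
The plan is to pin down the free endpoint of the support by a continuity-plus-monotonicity argument applied to the total mass as a function of that endpoint, treating Assumption~I in full and Assumption~II by reflection. Under Assumption~I, Lemma~2.8 already gives, for every $s\in[c,d)$, a unique solution $\bar u_\varepsilon^{(s)}$ on $[s,d]$ vanishing at both endpoints, together with the constant $C_\varepsilon(s)$, which it shows to be strictly increasing in $s$. First I would introduce the mass function
\[
\Lambda_\varepsilon(s):=\int_s^d\bar u_\varepsilon^{(s)}(y)\,dy,\qquad s\in[c,d),
\]
and observe that the lemma amounts to the statement that $\Lambda_\varepsilon$ equals $1$ at exactly one point, which we then name $p_\varepsilon^*(d)$. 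The hypothesis (18) is precisely $\Lambda_\varepsilon(c)>1$: writing $\bar u_\varepsilon^{(c)}(y)=\int_d^y\eta_\varepsilon$ with $\eta_\varepsilon=\theta_\varepsilon/\lambda_\varepsilon=(-x^2/2+C_\varepsilon)/\lambda_\varepsilon(x,C_\varepsilon)$ and $C_\varepsilon=C_\varepsilon(c)$ reproduces the double integral in (18). It then remains to control the opposite endpoint, continuity, and strict monotonicity of $\Lambda_\varepsilon$.

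The endpoint value is easy: since $|\bar u_{\varepsilon,y}^{(s)}|=|\eta_\varepsilon|\le\alpha$ (this is exactly the constraint $\lambda_\varepsilon\le1$) and $\bar u_\varepsilon^{(s)}(s)=0$, one has $0\le\bar u_\varepsilon^{(s)}(y)\le\alpha(y-s)$, so $0\le\Lambda_\varepsilon(s)\le\alpha(d-s)^2\to0$ as $s\to d^-$. Continuity of $\Lambda_\varepsilon$ I would deduce from continuous dependence on parameters: $s\mapsto C_\varepsilon(s)$ is continuous, being the inverse of the strictly increasing $M_\varepsilon$ from Lemma~2.8, and $\eta_\varepsilon$ is a continuous function of $(y,C_\varepsilon)$, so the iterated integral defining $\Lambda_\varepsilon$ varies continuously with $s$.

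The crux is strict monotonicity, and the hard part will be proving the mass genuinely decreases as $s$ grows in spite of the fact that $\theta_\varepsilon=-y^2/2+C_\varepsilon$ changes sign across the support. The device that makes this clean is the pointwise relation $\theta_\varepsilon=\lambda_\varepsilon\bar u_{\varepsilon,y}$ together with $\lambda_\varepsilon=\mathrm e^{(\bar u_{\varepsilon,y}^2-\alpha^2)/(2\varepsilon)}$, which gives $\theta_\varepsilon=g(\eta_\varepsilon)$ for $g(\eta):=\eta\,\mathrm e^{(\eta^2-\alpha^2)/(2\varepsilon)}$; since $g'(\eta)=\mathrm e^{(\eta^2-\alpha^2)/(2\varepsilon)}(1+\eta^2/\varepsilon)>0$, the inverse $\eta=g^{-1}(\theta)$ is strictly increasing. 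Now fix $c\le s_1<s_2<d$. On the overlap $[s_2,d]$ both solutions have $\theta_\varepsilon^{(s_i)}(y)=-y^2/2+C_\varepsilon(s_i)$, and $C_\varepsilon(s_1)<C_\varepsilon(s_2)$ forces $\theta_\varepsilon^{(s_1)}(y)<\theta_\varepsilon^{(s_2)}(y)$, whence $\eta_\varepsilon^{(s_1)}(y)<\eta_\varepsilon^{(s_2)}(y)$. Writing $\bar u_\varepsilon^{(s_i)}(y)=-\int_y^d\eta_\varepsilon^{(s_i)}(x)\,dx$ (using the right-endpoint Dirichlet condition) reverses the inequality into $\bar u_\varepsilon^{(s_1)}(y)>\bar u_\varepsilon^{(s_2)}(y)$ on $[s_2,d)$, so with $\bar u_\varepsilon^{(s_1)}\ge0$ on $[s_1,s_2]$ we conclude $\Lambda_\varepsilon(s_1)>\Lambda_\varepsilon(s_2)$.

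Combining the pieces, $\Lambda_\varepsilon$ is continuous and strictly decreasing on $[c,d)$ with $\Lambda_\varepsilon(c)>1>0=\lim_{s\to d^-}\Lambda_\varepsilon(s)$, so the intermediate value theorem yields a unique $p_\varepsilon^*(d)\in(c,d)$ with $\Lambda_\varepsilon(p_\varepsilon^*(d))=1$, giving $\mathrm{Supp}\,\bar u_\varepsilon=[p_\varepsilon^*(d),d]$ and the required normalization. Under Assumption~II the argument is identical after reflection: there $\theta_\varepsilon^{(t)}(y)=y^2/2-D_\varepsilon(t)$ with $D_\varepsilon(t)$ strictly decreasing in $t$, so the same $g^{-1}$ comparison shows $\tilde\Lambda_\varepsilon(t):=\int_c^t\bar u_\varepsilon^{(t)}$ is strictly increasing, tends to $0$ as $t\to c^+$, and exceeds $1$ at $t=d$ by (19), producing a unique $q_\varepsilon^*(c)$. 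The only delicate points I anticipate are the monotonicity step just described and, secondarily, the continuity (and well-posedness over the whole parameter range) of $s\mapsto C_\varepsilon(s)$ and $t\mapsto D_\varepsilon(t)$, both of which I would draw directly from the construction in Lemma~2.8.
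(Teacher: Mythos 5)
Your proposal is correct and follows essentially the same route as the paper: the paper defines exactly your mass function (called $\Pi(s)$ there), asserts that it is strictly decreasing because $C_\varepsilon(s)$ is strictly increasing, and concludes from hypothesis (18) via the intermediate value theorem. The only difference is that you actually supply the details the paper dismisses as ``easy to check'' --- the pointwise comparison $\eta_\varepsilon=g^{-1}(\theta_\varepsilon)$ with $g(\eta)=\eta\,\mathrm e^{(\eta^2-\alpha^2)/(2\varepsilon)}$ strictly increasing, the continuity of $s\mapsto C_\varepsilon(s)$, and the vanishing of the mass as $s\to d^-$ --- which is a welcome strengthening rather than a departure.
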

\begin{proof}

{\it First Part:}\\

Let $\text{\rm Supp}\bar{u}_{\varepsilon}=[s,d]$ and define a
function $\Pi:[c,d)\to\mathbb{R}^+$ as follows,
\[\Pi(s):=\displaystyle\int_s^d\int_d^y\Big(-x^2/2+C_\varepsilon(s)\Big)/\Big(\lambda_{\varepsilon}(x,C_\varepsilon(s))\Big)dxdy.\]
Indeed, since $C_\varepsilon$ is strictly increasing with respect to
$s\in[c,d)$, as a result, it is easy to check that $\Pi$ is a
strictly decreasing function with respect to $s\in[c,d)$. The first
assertion follows immediately when we recall (18).\\

{\it Second Part:}\\

Let $\text{\rm Supp}\bar{u}_{\varepsilon}=[c,t]$ and define a
function $\Pi:(c,d]\to\mathbb{R}^+$ as follows,
\[\Pi(t):=\displaystyle\int_c^t\int_c^y\Big(x^2/2-D_\varepsilon(t)\Big)/\Big(\lambda_{\varepsilon}(x,D_\varepsilon(t))\Big)dxdy.\]
Indeed, since $D_\varepsilon$ is strictly decreasing with respect to
$t\in[c,d)$, as a result, it is easy to check that $\Pi$ is a
strictly increasing function with respect to $t\in(c,d]$. The second
assertion follows immediately when we recall (19).
\end{proof}

Next we verify that $\bar{u}_\varepsilon$ is exactly a global
minimizer for ($\mathcal{P}^{(\varepsilon)}$) and
$\bar{\zeta}_\varepsilon$ is a global maximizer for
($\mathcal{P}^{(\varepsilon)}_d$).
\begin{lem}(Canonical Duality Theory)
For $\forall\ \varepsilon>0$, $\bar{u}_\varepsilon$ in Lemma 2.9 is
a global minimizer for the approximation problem
($\mathcal{P}^{(\varepsilon)}$). And the corresponding
$\bar{\zeta}_\varepsilon$ is a global maximizer for the dual problem
($\mathcal{P}_d^{(\varepsilon)}$). Moreover, the following duality
identity holds, \beq
I^{(\varepsilon)}[\bar{u}_\varepsilon]=\displaystyle\min_{u_\varepsilon}I^{(\varepsilon)}[u_\varepsilon]=\Xi^{(\varepsilon)}(\bar{u}_\varepsilon,\bar{\zeta}_\varepsilon)=\displaystyle\max_{\zeta_\varepsilon}I_d^{(\varepsilon)}[\zeta_\varepsilon]=I_d^{(\varepsilon)}[\bar{\zeta}_\varepsilon],
\eneq where $u_\varepsilon$ is subject to the constraints (2)-(5)
and $\zeta_\varepsilon\in\mathscr{V}^{(\varepsilon)}$.
\end{lem}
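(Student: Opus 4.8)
The plan is to exploit the convex--concave (saddle) structure of the Gao--Strang total complementary functional $\Xi^{(\varepsilon)}$ and to show that the critical pair $(\bar u_\varepsilon,\bar\zeta_\varepsilon)$ supplied by Lemmas 2.3 and 2.9 is in fact a \emph{global} saddle point. First I would record the two one-sided convexity facts. For fixed $\zeta_\varepsilon\in\mathscr V^{(\varepsilon)}$, i.e. $0<\zeta_\varepsilon\le\varepsilon$, the entire $u_\varepsilon$-dependence of $\Xi^{(\varepsilon)}$ resides in $\frac{\zeta_\varepsilon}{2\varepsilon}u_{\varepsilon,y}^2-|y|u_\varepsilon$, which is convex in $u_\varepsilon$ precisely because $\zeta_\varepsilon>0$; hence $u_\varepsilon\mapsto\Xi^{(\varepsilon)}(u_\varepsilon,\zeta_\varepsilon)$ is convex. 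Dually, for fixed $u_\varepsilon$ the $\zeta_\varepsilon$-dependence is $\frac{u_{\varepsilon,y}^2-\alpha^2}{2\varepsilon}\zeta_\varepsilon-\Psi^{(\varepsilon)}_\ast(\zeta_\varepsilon)$, and since $\Psi^{(\varepsilon)}_\ast(\zeta_\varepsilon)=\zeta_\varepsilon(\ln(\zeta_\varepsilon/\varepsilon)-1)$ has second derivative $1/\zeta_\varepsilon>0$, the map $\zeta_\varepsilon\mapsto\Xi^{(\varepsilon)}(u_\varepsilon,\zeta_\varepsilon)$ is strictly concave on $\mathscr V^{(\varepsilon)}$. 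This places us in the classical saddle (min--max) regime of canonical duality, where no triality subtlety intervenes and the primal extremum is a genuine global minimum while the dual extremum is a genuine global maximum.

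Next I would carry out the two partial optimizations and match them to the primal and the dual problems. For the inner maximization in $\zeta_\varepsilon$, the concavity just established together with the stationarity condition (12)---the constructive law $\Phi^{(\varepsilon)}(u_\varepsilon)=D_{\zeta_\varepsilon}\Psi^{(\varepsilon)}_\ast(\zeta_\varepsilon)$---identifies the interior maximizer with the Legendre conjugate point, so that $\max_{\zeta_\varepsilon}\{\Phi^{(\varepsilon)}(u_\varepsilon)\zeta_\varepsilon-\Psi^{(\varepsilon)}_\ast(\zeta_\varepsilon)\}=\Psi^{(\varepsilon)}(\Phi^{(\varepsilon)}(u_\varepsilon))=H^{(\varepsilon)}(u_{\varepsilon,y})$, whence $\max_{\zeta_\varepsilon\in\mathscr V^{(\varepsilon)}}\Xi^{(\varepsilon)}(u_\varepsilon,\zeta_\varepsilon)=I^{(\varepsilon)}[u_\varepsilon]$ for every admissible $u_\varepsilon$. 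Here one must monitor the box constraint: the gradient bound (5), $|u_{\varepsilon,y}|\le\alpha$, is exactly what forces $\xi_\varepsilon=\Phi^{(\varepsilon)}(u_\varepsilon)\le 0$ and hence keeps the conjugate point $\zeta_\varepsilon=\varepsilon e^{\xi_\varepsilon}$ inside $\mathscr V^{(\varepsilon)}$, so the constrained and unconstrained maxima coincide. For the inner minimization in $u_\varepsilon$, convexity together with the equilibrium form of (11) from Lemma 2.3 identifies the minimizer with the solution of the equilibrium equation, and via the integration-by-parts computation of Lemma 2.5 this yields $\min_{u_\varepsilon}\Xi^{(\varepsilon)}(u_\varepsilon,\zeta_\varepsilon)=I^{(\varepsilon)}_d[\zeta_\varepsilon]$, the pure complementary energy of Definition 2.4; at $\zeta_\varepsilon=\bar\zeta_\varepsilon$ this minimizer is the Euler--Lagrange solution $\bar u_\varepsilon$ of (10).

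With these identifications in hand the conclusion follows from the saddle inequalities. Criticality (11)--(12) and the convex--concave structure give
\[
\Xi^{(\varepsilon)}(\bar u_\varepsilon,\zeta_\varepsilon)\le\Xi^{(\varepsilon)}(\bar u_\varepsilon,\bar\zeta_\varepsilon)\le\Xi^{(\varepsilon)}(u_\varepsilon,\bar\zeta_\varepsilon)
\]
for all admissible $u_\varepsilon$ and all $\zeta_\varepsilon\in\mathscr V^{(\varepsilon)}$: the left inequality because $\bar\zeta_\varepsilon$ is the unique maximizer of the concave map $\zeta_\varepsilon\mapsto\Xi^{(\varepsilon)}(\bar u_\varepsilon,\zeta_\varepsilon)$, the right because $\bar u_\varepsilon$ is the minimizer of the convex map $u_\varepsilon\mapsto\Xi^{(\varepsilon)}(u_\varepsilon,\bar\zeta_\varepsilon)$. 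Bounding the right-hand side above by $\max_{\zeta_\varepsilon}\Xi^{(\varepsilon)}(u_\varepsilon,\zeta_\varepsilon)=I^{(\varepsilon)}[u_\varepsilon]$ and using $\max_{\zeta_\varepsilon}\Xi^{(\varepsilon)}(\bar u_\varepsilon,\zeta_\varepsilon)=I^{(\varepsilon)}[\bar u_\varepsilon]$ gives $I^{(\varepsilon)}[\bar u_\varepsilon]=\Xi^{(\varepsilon)}(\bar u_\varepsilon,\bar\zeta_\varepsilon)\le I^{(\varepsilon)}[u_\varepsilon]$, so $\bar u_\varepsilon$ is a global minimizer of $(\mathcal P^{(\varepsilon)})$; bounding the left-hand side below by $\min_{u_\varepsilon}\Xi^{(\varepsilon)}(u_\varepsilon,\zeta_\varepsilon)=I^{(\varepsilon)}_d[\zeta_\varepsilon]$ and using $\min_{u_\varepsilon}\Xi^{(\varepsilon)}(u_\varepsilon,\bar\zeta_\varepsilon)=I^{(\varepsilon)}_d[\bar\zeta_\varepsilon]$ gives $I^{(\varepsilon)}_d[\bar\zeta_\varepsilon]=\Xi^{(\varepsilon)}(\bar u_\varepsilon,\bar\zeta_\varepsilon)\ge I^{(\varepsilon)}_d[\zeta_\varepsilon]$, so $\bar\zeta_\varepsilon$ is a global maximizer of $(\mathcal P^{(\varepsilon)}_d)$. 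Chaining the common value $\Xi^{(\varepsilon)}(\bar u_\varepsilon,\bar\zeta_\varepsilon)$ yields the duality identity (20). I expect the main obstacle to be the careful bookkeeping at the boundary of $\mathscr V^{(\varepsilon)}$---verifying that the Legendre recovery $\max_{\zeta_\varepsilon}\Xi^{(\varepsilon)}=I^{(\varepsilon)}$ survives the restriction $\zeta_\varepsilon\le\varepsilon$ rather than merely $\zeta_\varepsilon>0$, and that all competitors together with the substituted solution live on the fixed support $U^{(\varepsilon)}$ from Lemma 2.10 with matching boundary data, so that the boundary terms behind Lemma 2.5 genuinely vanish.
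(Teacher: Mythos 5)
Your argument is correct, but it is not the route the paper takes. The paper's proof is much shorter and works entirely on the two functionals separately: it computes the second variation of $I^{(\varepsilon)}$ at $\bar{u}_\varepsilon$, namely $\int_{U^{(\varepsilon)}}{\rm e}^{(\bar{u}_{\varepsilon,x}^2-\alpha^2)/(2\varepsilon)}\{(\bar{u}_{\varepsilon,x}\phi_x)^2/\varepsilon+\phi_x^2\}dx\geq 0$, and the second variation of $I_d^{(\varepsilon)}$ at $\bar{\zeta}_\varepsilon$, namely $-\int_{U^{(\varepsilon)}}\{\varepsilon\theta_\varepsilon^2\psi^2/\bar{\zeta}_\varepsilon^3+\psi^2/\bar{\zeta}_\varepsilon\}dx\leq 0$, and concludes global minimality/maximality from these sign conditions (the integrands being of fixed sign for every admissible competitor, not merely at the critical point). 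You instead exploit the convex--concave saddle structure of the total complementary functional $\Xi^{(\varepsilon)}$, prove the two partial Legendre identities $\max_{\zeta_\varepsilon}\Xi^{(\varepsilon)}(u_\varepsilon,\cdot)=I^{(\varepsilon)}[u_\varepsilon]$ and $\min_{u_\varepsilon}\Xi^{(\varepsilon)}(\cdot,\zeta_\varepsilon)=I_d^{(\varepsilon)}[\zeta_\varepsilon]$, and chain the saddle inequalities. Your route is longer but buys more: it actually derives the zero-duality-gap identity (20), which the paper's proof never verifies (it only establishes the extremality claims and leaves the equality chain asserted), and it makes explicit where the constraint $\|u_{\varepsilon,y}\|_{L^\infty}\leq\alpha$ enters, namely in keeping the conjugate point $\zeta_\varepsilon=\varepsilon{\rm e}^{\xi_\varepsilon}$ inside $\mathscr{V}^{(\varepsilon)}$. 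The one point you should pin down if you write this up in full is the meaning of $I_d^{(\varepsilon)}[\zeta_\varepsilon]$ for $\zeta_\varepsilon\neq\bar{\zeta}_\varepsilon$: Definition 2.4 ties it to the fixed solution $\bar{u}_\varepsilon$ of (10), whereas your identification $I_d^{(\varepsilon)}[\zeta_\varepsilon]=\min_{u_\varepsilon}\Xi^{(\varepsilon)}(u_\varepsilon,\zeta_\varepsilon)$ requires the $\zeta_\varepsilon$-dependent equilibrium solution of Lemma 2.3 together with the boundary condition that makes the term $(I)$ in Lemma 2.5 vanish; these agree under the representation $\theta_{\varepsilon,y}+|y|=0$ of (13), but that reconciliation deserves a sentence.
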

Lemma 2.10 demonstrates that the maximization of the pure
complementary energy functional $I_d^{(\varepsilon)}$ is perfectly
dual to the minimization of the potential energy functional
$I^{(\varepsilon)}$. In effect, the identity (20) indicates there is
no duality gap between them.
\begin{proof}
On the one hand, for any function $\phi\in
W_0^{1,\infty}(U^{(\varepsilon)})$, the second variational form
$\delta_\phi^2I^{(\varepsilon)}$ is equal to\beq
\int_{U^{(\varepsilon)}}{\rm
e}^{(\bar{u}_{\varepsilon,x}^2-\alpha^2)/(2\varepsilon)}\Big\{(\bar{u}_{\varepsilon,x}
\phi_x)^2/\varepsilon+\phi_x^2\Big\}dx.\eneq On the other hand, for
any function $\psi\in\mathscr{V}^{(\varepsilon)}$, the second
variational form $\delta_\psi^2I_d^{(\varepsilon)}$ is equal to
\beq-\int_{U^{(\varepsilon)}}\Big\{\varepsilon\theta_\varepsilon^2\psi^2/\bar{\zeta}_\varepsilon^3+\psi^2/\bar{\zeta}_\varepsilon\Big\}dx.
\eneq From (21) and (22), one deduces immediately that
\[
\delta^2_\phi I^{(\varepsilon)}[\bar{u}_\varepsilon]\geq0,\ \
\delta_\psi^2I_d^{(\varepsilon)}[\bar{\zeta}_\varepsilon]\leq0.
\]
\end{proof}
Consequently, we reach the conclusion of Theorem 1.1 by summarizing
the above discussion.
\subsection{Proof of Theorem 1.2}
According to Rellich-Kondrachov Compactness Theorem, since
$$\displaystyle\sup_{\varepsilon}|\bar{u}_\varepsilon|\leq \alpha(d-c)$$ and
$$\displaystyle\sup_{\varepsilon}|\bar{u}_{\varepsilon,y}|\leq\alpha,$$ then, there exists a
subsequence $\{\bar{u}_{\varepsilon_k}\}_{\varepsilon_k}$ and
$f^{-}\in W_0^{1,\infty}(\Omega^*)\cap C(\overline{\Omega^*})$ such
that \beq\bar{u}_{\varepsilon_k}\rightarrow f^-\ (k\to\infty)\
\text{in}\ L^\infty(\Omega^*),\eneq
\beq\bar{u}_{\varepsilon_k,y}\ \overrightharpoon{*}\ f^-_{y}\
(k\to\infty)\ \text{weakly\ $\ast$\ in}\ L^\infty(\Omega^*).\eneq It
remains to check that $f^-$ satisfies (2)-(5). From (23), one knows
\beq\bar{u}_{\varepsilon_k}\rightarrow f^-\ (k\to\infty)\
\text{a.e.\ in}\ \Omega^*.\eneq According to Lebesgue's dominated
convergence theorem,
\[\displaystyle\int_{\Omega^*}f^-(y)dy=\lim_{k\to\infty}\int_{\Omega^*}\bar{u}_{\varepsilon_k}(y)dy=1.\]
From (24), one has
\[\|f^-_y\|_{L^\infty(\Omega^*)}\leq\displaystyle\liminf_{k\to\infty}\|\bar{u}_{\varepsilon_k,y}\|_{L^\infty(\Omega^*)}\leq\sup_{k\to\infty}\|\bar{u}_{\varepsilon_k,y}\|_{L^\infty(\Omega^*)}\leq\alpha.\]
\subsection{Proof of Theorem 1.3}
On the one hand, under Assumption I and III, one solves the
differential problems
\[\bar{u}_\varepsilon({\bf s}_\varepsilon){\rm d}{\bf s}_\varepsilon=f^+(x){\rm d}x,\ \ {\bf s}_\varepsilon(b)=d,\]
or
\[\bar{u}_\varepsilon({\bf s}_\varepsilon){\rm d}{\bf s}_\varepsilon=-f^+(x){\rm d}x,\ \ {\bf s}_\varepsilon(a)=d\]
respectively. On the other hand, under Assumption II and III, one
solves the differential problems
\[\bar{u}_\varepsilon({\bf s}_\varepsilon){\rm d}{\bf s}_\varepsilon=f^+(x){\rm d}x,\ \ {\bf s}_\varepsilon(a)=c,\]
or
\[\bar{u}_\varepsilon({\bf s}_\varepsilon){\rm d}{\bf s}_\varepsilon=-f^+(x){\rm d}x,\ \ {\bf s}_\varepsilon(b)=c\]
respectively. Our conclusion follows immediately.\\
\\

{\bf Concluding Remarks}:\\
\\
In this paper, we mainly focus on the construction of 1-1 mappings
approximating the optimal Monge transfer mapping. Rather than
numerical simulation, we give the explicit representation of the
approximating mappings by applying the canonical duality method.
Together with the other convex approximation mechanisms used by L.
A. Caffarelli, W. Gangbo, R. J. McCann, N. S. Trudinger, L. C.
Evans, W. Gangbo and X. J. Wang
\cite{Ca2,DM,Evans1,Evans2,GC1,GC2,Wang1,Wang2}, we provide another
viewpoint, namely, nonlinear differential equation approach, for the Monge transfer problem.\\
\\
As a matter of fact, in a similar manner, Assumption I and
Assumption II can be relaxed to the whole $\mathbb{R}$ as long as
$\Omega\bigcap\Omega^*=\emptyset$. It remains to discuss various
cases when $\Omega\bigcap\Omega^*\neq\emptyset$, in which case,
several new assumptions will have to be introduced. Furthermore,
optimal transfer mapping in the $n$-dimensional case will be given
in
a sequential paper. The canonical duality method proves to be useful and can also be applied in the discussion of $p$-Laplacian problems and optimal probability density for $p$-th moment etc. \cite{LU2,LU3}.\\
\\

{\bf Acknowledgment}: The main results in this paper were obtained
during a research collaboration at the Federation University
Australia in August, 2016. Both authors wish to thank Professor
David Y. Gao for his hospitality and financial support. This project
is partially supported by US Air Force Office of Scientific Research
(AFOSR FA9550-10-1-0487), Natural Science Foundation of Jiangsu
Province (BK 20130598), National Natural Science Foundation of China
(NSFC 71273048, 71473036, 11471072), the Scientific Research
Foundation for the Returned Overseas Chinese Scholars, Fundamental
Research Funds for the Central Universities on the Field Research of
Commercialization of Marriage between China and Vietnam (No.
2014B15214). This work is also supported by Open Research Fund
Program of Jiangsu Key Laboratory of Engineering Mechanics,
Southeast University (LEM16B06). In particular, the authors also
express their deep gratitude to the referees for their careful
reading and useful remarks.

\end{document}